\newtheorem{theorem}{Theorem}[section]
\newtheorem{cor}[theorem]{Corollary}
\newtheorem{lemma}[theorem]{Lemma}
\newtheorem{prop}[theorem]{Proposition}
\theoremstyle{definition}
\newtheorem{example}[theorem]{Example}
\definecolor{magenta}{RGB}{203,0,150}
\definecolor{blueish}{RGB}{0,35,211}
\newcommand{\h}{\hspace{2mm}}  
\title{Palindromic Width of Wreath Products}
\date{\today}
\author{Elisabeth Fink\footnote{This work is supported by the ERC starting grant 257110 ``RaWG''}}
\begin{document}

\selectlanguage{british}

\maketitle

\begin{abstract}
We show that the wreath product $G \wr \mathbb{Z}^n$ of any finitely generated group $G$ with $\mathbb{Z}^n$ has
finite palindromic width. We also show that $C \wr A$ has finite palindromic width if $C$ has finite commutator width
and $A$ is a finitely generated infinite abelian group. Further we prove that if $H$ is a non-abelian group with
finite palindromic width and $G$ any finitely generated group, then every element of the subgroup $G' \wr H$ can be expressed as a
product of uniformly boundedly many palindromes. From this we obtain that $P \wr H$ has finite palindromic width if $P$ is a
perfect group and further that $G \wr F$ has finite palindromic width for any finite, non-abelian group $F$.
\end{abstract}

\section{Introduction}

Palindromic words in groups have been studied from various angles lately. They make their first appearance in
\cite{collins_palindromes}, where D. Collins studied palindromic automorphisms of free groups. In \cite{glover_jensen} H.H. Glover
and C.A. Jensen study the geometry of palindromic automorphism groups of the free group. Later in
\cite{bardakov_shpilrain_tolstykh} it was shown that free groups have infinite palindromic and primitive width and F. Deloup
\cite{deloup} studied the palindromic map, which is an anti-automorphism, in braid and Artin groups.

\medskip

More recently, it has been established by V. Bardakov and K. Gongopadhyay \cite{bardakov} that free nilpotent groups and free
abelian-by-nilpotent groups have finite palindromic width. A paper in preparation by the same authors shows that some extensions
and quotients of these groups have finite palindromic width as well \cite{bardakov_nilpotent}. Another paper
\cite{bardakov_soluble} by the same authers proves that certain soluble groups have finite palindromic width. In \cite{bardakov}
they use results about
the commutator width in nilpotent groups to establish their result. Independently, it has been shown by T. Riley
and A. Sale in \cite{rileySale} that free metabelian groups have finite palindromic width by using results about skew-symmetric
functions on free abelian groups. Further, the same authors show that $B \wr \mathbb{Z}^n$ has finite palindromic width if $B$ is
a group with finite palindromic width.

\medskip

We extend this result to the case where $G$ is any finitely generated group, then we show that $G \wr \mathbb{Z}^n$ has
finite palindromic width.  A result by M. Akhavan-Malayeri \cite{akhavan_comm_wreath} shows that the wreath product of $F_d$ with
$\mathbb{Z}^n$ has finite commutator width. We use the result from \cite{akhavan_comm_wreath} to prove that $F_d \wr \mathbb{Z}^n$
has finite
palindromic width and then deduce that this property also holds for its quotients. We also give a proof that the
wreath product $C \wr A$ has finite palindromic width if $C$ is a finitely generated group that has finite commutator width and
$A$ a finitely generated infinite abelian group.

\medskip

More generally, let $G$ be any finitely generated group and $H$ a non-abelian group which has finite palindromic width with
respect to some generating set. We establish that every element of the subgroup $G' \wr H$ of the regular wreath product $G \wr H$
is a finite product of palindromes. As a Corollary we obtain that $G \wr F$ has finite palindromic width if $F$ is a non-abelian
finite group.

\medskip

More concretely, we prove the following, where $pw(G,X)$ denotes the palindromic width of the group $G$ with respect to the
generating set $X$.

\begin{theorem}
\begin{enumerate}
 \item Let $G$ be a $d$-generated group generated by $X$ and $E$ be the standard generating set of $\mathbb{Z}^k$, for $k \in
\mathbb{N}$. Then we have that $pw\left(G \wr \mathbb{Z}^k, X \cup E\right) \leq 5d+9k$ if $k$ is even and $pw\left(G \wr
\mathbb{Z}^k, X \cup E\right) \leq 5d+9k+2$ if $k$ is odd.
 \item Assume that $A$ is an $r$-generated infinite abelian group generated by $T$ and $C$ a finitely generated group with finite
commutator width $n$. Then $pw(C \wr A, T \cup Y) \leq 5|Y|+6r+7n$, for any finite generating set $Y$ of $C$.
 \item Let $G$ be a finitely generated group and $H$ a non-abelian finitely generated group with
finite palindromic width with respect to the generating set $Z$. Then every element of the subgroup $G' \wr H$ can be
written as a product of at most $pw(H,Z)+1$ palindromes.
 \item Assume that $P$ is a finitely generated perfect group and $H$ as above. Then for any generating set $S$ of $P$ we have
$pw(P \wr H, Z \cup \{c\} \cup S) \leq pw(H,Z)+1$, where $c \in H$, possibly $c=1$.
 \item If $K$ is a non-abelian finite group and $G$ a $d$-generated group, then $G \wr H$ has finite palindromic width with
respect to the natural generating set.
\end{enumerate}

\end{theorem}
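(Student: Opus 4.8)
The plan is to derive this statement from part (3), which already handles the genuinely hard part of the base group. Note first that a finite non-abelian group $K$ is finitely generated (say by $Z$) and automatically has finite palindromic width, so part (3) applies with $H = K$: every element of the subgroup $G' \wr K$ is a product of at most $pw(K,Z)+1$ palindromes with respect to the natural generating set $X \cup Z$ of $G \wr K$, where $X = \{x_1,\dots,x_d\}$ generates $G$. The remaining task is to control the directions of the base group $B = \bigoplus_{\kappa \in K} G$ that lie outside $G'$, that is, the image of $B$ in $\bigoplus_{\kappa} G^{ab}$. The key point will be that this abelian part is cheap, precisely because $G$ is generated by $X$ and powers of a single generator are palindromes, so I never need to assume anything about the palindromic width of $G$ itself.

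Concretely, I would first record a small lemma: \emph{for any word $u$ in $X \cup Z$ and any palindrome $v$, the conjugate $uvu^{-1}$ is a product of at most two palindromes.} Indeed, writing $\overline{u}$ for the reversed word, both $u v \overline{u}$ and $(u\overline{u})^{-1}$ are palindromes, and $u v \overline{u}\cdot (u\overline{u})^{-1} = uvu^{-1}$. Applying this with $u$ a fixed word representing $\kappa \in K$ and $v = x_i^{\,n}$ (itself a palindrome for every $n \in \mathbb{Z}$) shows that the base element equal to $x_i^{\,n}$ in coordinate $\kappa$ and trivial elsewhere is a product of at most two palindromes, with the bound $2$ independent of $n$.

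Now take an arbitrary $g \in G \wr K$ and write $g = b\,k$ with $b=(g_\kappa)_{\kappa}\in B$ and $k \in K$. In each coordinate write $g_\kappa = a_\kappa c_\kappa$, where $a_\kappa = x_1^{n_{\kappa,1}}\cdots x_d^{n_{\kappa,d}}$ is a fixed-shape representative of the image of $g_\kappa$ in $G^{ab}$ and $c_\kappa \in G'$. Since distinct coordinates of $B$ commute, $b = b_a\, b_c$ with $b_a = (a_\kappa)_\kappa$ and $b_c=(c_\kappa)_\kappa \in \bigoplus_\kappa G'$, so that $g = b_a\,(b_c\,k)$. Here $b_c\,k$ lies in $G' \wr K$ and is therefore a product of at most $pw(K,Z)+1$ palindromes by part (3). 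For $b_a$ I expand $b_a = \prod_{\kappa \in K}\prod_{i=1}^{d}\big(x_i^{n_{\kappa,i}}\ \text{in coordinate}\ \kappa\big)$; by the lemma each of the $d|K|$ factors is a product of at most two palindromes, so $b_a$ is a product of at most $2d|K|$ palindromes. Collecting the two contributions yields the uniform bound $pw\big(G \wr K,\,X\cup Z\big) \le 2d|K| + pw(K,Z) + 1$, which establishes finiteness.

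The step I expect to require care is the clean separation of the base into its $G^{ab}$-part and its $G'$-part while keeping the count uniform: one must use that distinct coordinates commute, so the splitting $b = b_a b_c$ introduces no cross-interaction, and that the number of palindromes produced for $b_a$ depends only on $d$ and $|K|$ and not on the exponents $n_{\kappa,i}$, which is exactly what the conjugation lemma secures. The genuinely non-trivial content, handling arbitrary commutator-subgroup entries $c_\kappa$ with no palindromic-width hypothesis on $G$, is already supplied by part (3); everything added here is elementary.
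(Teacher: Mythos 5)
Your proposal is correct, and it establishes exactly what part (5) claims, but its execution differs from the paper's in two ways worth noting. The paper first proves the result for $F_d \wr K$ by splitting it as $\left(F_d^{ab} \wr K\right)\cdot\left(F_d' \wr K\right)$, handles the abelianized factor by writing each coordinate entry as at most $d$ power-palindromes and each element of $K$ as at most $\max\left\{l(h) \mid h \in K\right\}$ single-letter palindromes, and then transfers the conclusion to an arbitrary $d$-generated $G$ via Lemma \ref{lemma_pw_in_quotients}. You instead work with $G$ directly: you split each base coordinate as $g_\kappa = a_\kappa c_\kappa$ with $a_\kappa = x_1^{n_{\kappa,1}}\cdots x_d^{n_{\kappa,d}}$ and $c_\kappa \in G'$, which both avoids the detour through free groups and quotients and makes rigorous the paper's somewhat informal identification of the wreath product with a product of an abelianized part and a derived part (your splitting happens coordinatewise inside the base group, where distinct coordinates genuinely commute, and the inner products over $i$ are taken in a fixed order, so no commutation within $G$ is ever needed). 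Your second novelty is the conjugation identity $uvu^{-1} = \left(uv\overline{u}\right)\cdot\left(u\overline{u}\right)^{-1}$, which expresses any conjugate of a palindrome as a product of two palindromes uniformly in the exponent; this replaces the paper's word-length bookkeeping for moving base elements to their coordinates and yields the bound $2d|K| + pw(K,Z) + 1$, incomparable with the paper's $\max\left\{l(h)\right\}\cdot\left(d|K|+1\right)+1$ in general, but finite, which is all that part (5) asserts.

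One small correction: the palindromes supplied by part (3), i.e.\ by Theorem \ref{thm_der_group}, are with respect to the generating set $X \cup \{c\} \cup Z$, where $c \in K$ is the possibly non-trivial extra generator produced by Lemma \ref{lemma_change_gen_set} to guarantee a relation $r$ with $\overline{r} \neq 1$. So your final generating set should be $X \cup Z \cup \{c\}$ rather than $X \cup Z$. Since the paper's own statement of part (5) uses this enlarged set, and enlarging a generating set can only decrease the palindromic width, nothing else in your argument is affected.
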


\textbf{Acknowledgement.} I would like to thank Tim Riley for many helpful comments and for pointing out a crucial mistake in an
earlier version of this paper. 

\section{Wreath Products and Words}

The regular wreath product $B \wr A$ of two groups $B$ and $A$ is given by $F \rtimes A$, where $F = \prod_A B$ and the action of
$A$ on itself is given by left multiplication. We can write every $g \in B \wr A$ as
\begin{equation}\label{eq_g1}
g = a \cdot \prod_{i=1}^k a_i^{-1} \left(f_i,1\right) a_i
\end{equation}
with $a,a_i \in A$ and $f_i
\in B$ and $k \in \mathbb{Z}$. If $X$ is a generating set for $A$ and $Y$ one for $B$, then the set $\left\{\left(y_i,1\right),
\left(1,x_i\right) \h | \h y_i \in Y, x_i \in X\right\}$ is a natural generating set for $B \wr A$.

\medskip

A \emph{group word} $w$ is an element of $F_n=\left<x_1, \dots, x_n\right>$, the free group on $n$ generators. Examples of such
words are commutators $w\left(x_1, x_2\right) = x_1^{-1}x_2^{-1}x_1x_2$, Burnside words $w\left(x_1\right)=x_1^p$ for some $p \in
\mathbb{Z}$ and many others. A word $w$ has the general form $w\left(x_1, \dots, x_n\right)=\prod_{j=1}^k x_{i_j}^{p_j}$ for $k
\in \mathbb{N}$ and $p_j \in \mathbb{Z}$. We say a word $w$ is a \emph{palindrome} if $x_{i_j}=x_{i_{k-j+1}}$ and $p_j=p_{k-j+1}$
for all $j=1, \dots, k$. Examples of palindromic words are $x_1x_2x_1$, all power words or words of the form $x_1^2x_2x_1^2$.
For any word $w$ in $F_n$ we denote by $\overline{w}$ the word 
\[\overline{w} = \prod_{j=1}^k x_{i_{k-j+1}}^{p_{k-j+1}},\] which we call the \emph{reverse word} of $w$. We will use this
notation throughout this paper.

\medskip

Every word $w \in F_n$ defines a verbal mapping for any group $G$ from $G^{(k)}=\prod_{i=1}^k G$ to $G$ in a natural way. More
details on this can be found in \cite{dan_words}. We call the image \[w\left(G, \dots, G\right) = \left<\left\{w\left(g_1,\dots,
g_n\right) \mid g_i \in G\right\}\right>\] the \emph{verbal subgroup} of $w$ in $G$. It is a well-studied topic to decide when
this
group is actually generated by products of at most $C \in \mathbb{N}$ words. If there exists such a constant $C$, then we say that
the word $w$
has \emph{finite width} in $G$. The most studied word is the commutator
word, leading to the question when every element of $G'$ can be expressed as a product of at most $C$ commutators. Examples of
groups having finite commutator width are all finite simple groups \cite{ore_conj} with commutator width $1$ or most finite
quasi-simple groups \cite{quasi_simple_groups}. Infinite examples are soluble groups satisfying the maximal condition on normal
subgroups \cite{akhavan_2}. It has been shown in \cite[Theorem 2.1.3]{dan_words} that any virtually nilpotent group of finite rank
has the property that any word has finite width. We note that palindromes are not group words under this definition.

\medskip

However, in analogy with the terminology above, we say that an element $g$ of a group is a palindrome, if it can be represented
by a palindromic word in the generators of $G$. Further, a group $G$ has \emph{finite palindromic width} $pw(G,X)$ with respect
to a generating set $X$, if every element of $G$ can be expressed as a product of at most $pw(G,X)$ many palindromes in the
alphabet $X$. If we just say a group $G$ has finite palindromic width without reference to a generating set, then we mean that
there exists a generating set with respect to which $G$ has finite palindromic width. It is yet unclear how the choice of the
generating set influences the palindromic width of $G$. We will however use below that if $G$ has finite palindromic width with
respect to a set $X$, then it has at most the same finite palindromic width with respect to any generating set containing $X$.

\section{Wreath Product with a Free Abelian Group}

In this Section we use the results of M. Akhavan-Malayeri from \cite{akhavan_comm_wreath} and V. Bardakov and K. Gongopadhyay
\cite{bardakov} to deduce that the wreath product of a finitely generated group with a finitely generated free abelian group has
finite palindromic width. We also give a self-contained proof that $C \wr A$ has finite palindromic width if $C$ is a group with
finite commutator width and $A$ an infinite finitely generated abelian group.

\medskip

We will implicitly use the following Lemma in many places. The proof of it is obvious and we therefore omit it.

\begin{lemma}\label{lemma_overline}
\begin{enumerate}[(a)]
 \item If $A$ is an abelian group, then $a=\overline{a}$ for all $a \in A$. 
 \item For any group $G$ and every element $g \in G$ we have that $\left(\overline{g}\right)^{-1} = \overline{g^{-1}}$.
\end{enumerate}
\end{lemma}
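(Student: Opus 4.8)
The plan is to verify both identities directly at the level of words, since the overline is defined as a purely combinatorial operation on the syllable decomposition $w = \prod_{j=1}^k x_{i_j}^{p_j}$; both claims then descend from words to group elements. Before starting I would flag the one subtlety worth making explicit: reversal is defined on words, so for a group element $g$ the symbol $\overline{g}$ really means the element represented by $\overline{w}$ for a chosen word $w$ representing $g$. Part (a) will show that in the abelian case this is independent of the choice and equals $g$, while part (b) is an identity that holds for each fixed representative.

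For part (a), I would fix any word $w = \prod_{j=1}^k x_{i_j}^{p_j}$ representing $a \in A$. Its reverse $\overline{w} = \prod_{j=1}^k x_{i_{k-j+1}}^{p_{k-j+1}}$ consists of exactly the same syllables $x_{i_1}^{p_1}, \dots, x_{i_k}^{p_k}$ listed in the opposite order. Evaluating both products in $A$ and using that multiplication in an abelian group does not depend on the order of the factors, the two words represent the same element, so $\overline{a} = a$; since this holds for every representing word, $\overline{a}$ is well defined and equal to $a$.

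For part (b), I would again fix a word $w = x_{i_1}^{p_1} \cdots x_{i_k}^{p_k}$ representing $g$ in an arbitrary group $G$. Inverting syllable by syllable gives $w^{-1} = x_{i_k}^{-p_k} \cdots x_{i_1}^{-p_1}$, and reversing this word (which flips the syllable order while leaving each exponent fixed) yields $\overline{w^{-1}} = x_{i_1}^{-p_1} \cdots x_{i_k}^{-p_k}$. On the other hand $\overline{w} = x_{i_k}^{p_k} \cdots x_{i_1}^{p_1}$, and its inverse is $\bigl(\overline{w}\bigr)^{-1} = x_{i_1}^{-p_1} \cdots x_{i_k}^{-p_k}$. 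The two words agree syllable for syllable, so they represent the same element of $G$, giving $\bigl(\overline{g}\bigr)^{-1} = \overline{g^{-1}}$.

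The content is entirely bookkeeping, so I do not expect a genuine obstacle; the only place to be attentive is the index shuffling in the definition of $\overline{w}$, in particular checking that reversing $w^{-1}$ simultaneously undoes the order reversal and the sign change produced by inversion, which is precisely what turns (b) into an identity rather than a mere word-level coincidence. This matches the author's assessment that the proof is immediate.
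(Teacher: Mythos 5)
Your proof is correct, and since the paper omits the argument as obvious, your direct syllable-by-syllable verification is precisely the intended one. Your added remark that $\overline{g}$ is a priori defined on representing words --- with (a) giving well-definedness in the abelian case and (b) holding per fixed representative --- is a careful touch consistent with how the overline is actually used in the paper's computations.
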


\medskip

The following Theorem gives a uniform bound on the commutator width of a wreath product. A first, less general, version of this
can also be found in M. Akhavan-Malayeri's thesis \cite{akhavan_malayeri_thesis}.

\begin{theorem}[\cite{akhavan_comm_wreath}]\label{thm_akhavan_wreath}
Let $A$ be a non-abelian free group and $W=A \wr B$, where $B$ is a free abelian group of rank $n$. Then every element
of $W'$ is a product of at most $n+2$ commutators, and furthermore $2 \leq cw(W) \leq n+2$. 
\end{theorem}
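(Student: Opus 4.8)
The plan is to reduce everything to the base group and then to exploit the fact that the top group $B$ is infinite in order to collapse arbitrarily long commutator products into a bounded number. Write $W = F \rtimes B$ with $F = \bigoplus_{b\in B} A$ the (finitely supported) product of copies of $A$, and let $t_1,\dots,t_n$ be a basis of $B \cong \mathbb{Z}^n$. Since $B$ is abelian, $W/F$ is abelian and hence $W' \le F$. First I would compute the abelianisation: because $B$ permutes the copies of $A$ transitively, conjugation identifies all of them in $W^{ab}$, giving $W^{ab} \cong A^{ab} \times B$. Consequently an element lies in $W'$ if and only if it is a finitely supported function $f : B \to A$ whose total value $\prod_{b} f(b)$ lies in $A'$ (equivalently, has trivial image in $A^{ab}$, which is order-independent). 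This is the description I would use throughout.

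Next I would collapse the support of $f$ to a single coordinate using one commutator per direction. Fixing the direction $t_n$ and grouping the support into fibres over $\mathbb{Z}^{n-1}$, a telescoping identity lets me write $f = [g, t_n] \cdot f_1$, where $g \in F$ is built from partial products along the $t_n$-lines and $f_1$ is supported on $\mathbb{Z}^{n-1}\times\{0\}$. Iterating over $t_{n-1}, \dots, t_1$ produces $f = c_1 \cdots c_n \cdot \delta_0(a)$, where each $c_i = [g_i, t_i]$ is a single commutator and $\delta_0(a)$ denotes the function with value $a\in A$ at the identity of $B$ and $1$ elsewhere. Since the $c_i$ lie in $W'$ and multiplication by them does not change the image in $A^{ab}$, the characterisation above forces $a \in A'$. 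This accounts for $n$ of the $n+2$ commutators.

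The crucial and hardest step is to express the single–coordinate element $\delta_0(a)$, with $a \in A'$, as only two commutators in $W$. Here one cannot argue inside a single copy of $A$, since a non-abelian free group has \emph{infinite} commutator width; the whole point is that the wreath structure removes this obstruction. Writing $a = \prod_{j=1}^m [p_j,q_j]$ as a (possibly very long) product of commutators in $A$, I would spread the factors along the $t_1$-line: let $P,Q \in F$ be the functions with $P(j-1)=p_j$ and $Q(j-1)=q_j$. Because elements supported at distinct coordinates commute, $[P,Q] = \prod_{j} \delta_{j-1}([p_j,q_j])$ is a single commutator equal to the factors $[p_j,q_j]$ placed at the successive positions $0,1,\dots,m-1$. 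A second telescoping identity then collects these scattered factors back to the origin: $\delta_0(a) = [P,Q]\cdot[h,t_1]$ for a suitable $h\in F$ assembled from the partial products $[p_1,q_1]\cdots[p_i,q_i]$. Thus $\delta_0(a)$ is a product of exactly two commutators, \emph{independently of} $m$, yielding the remaining two commutators and the bound $n+2$. Verifying the two telescoping identities, and that reusing the direction $t_1$ causes no interference with the earlier steps, is the main computation to be carried out with care.

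For the lower bound $cw(W)\ge 2$ I would exhibit an element of $W'$ that is not a single commutator. Projecting $W'$ into $F^{ab} = \bigoplus_{b\in B} A^{ab}$, the image of a single commutator $[x,y]$ has a restricted shape dictated by the $\mathbb{Z}[B]$-module structure of $F^{ab}$ (roughly, it is a $(\sigma-1)$-type expression in the shift operators), whereas the kernel of the summation map $\bigoplus_B A^{ab}\to A^{ab}$ contains elements not of this form. Producing one explicit such function and checking that it is a product of two but not of one commutator gives $cw(W) \ge 2$; together with the upper bound this yields $2 \le cw(W)\le n+2$.
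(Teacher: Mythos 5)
The paper contains no proof of this statement: it is imported verbatim from Akhavan-Malayeri \cite{akhavan_comm_wreath}, and the only ingredient of its proof that the paper actually uses (in proving Theorem \ref{thm_wreath_prod_abelian}) is the specific normal form it yields, namely that every element of the derived subgroup of the base group can be written as $[a,t][b,t^2]$ with $a,b$ in the base group and $t\in\mathbb{Z}^n$. Measured against that, your reconstruction is sound in outline and is essentially the known strategy: $W'\le F$ because $B$ is abelian; the identification $W^{\mathrm{ab}}\cong A^{\mathrm{ab}}\times B$ characterising $W'$ as the finitely supported functions with total value in $A'$; the collapse of support to the origin by one telescoped commutator $[g_i,t_i]$ per coordinate direction (correct even though $A$ is non-abelian, since distinct $t_i$-lines have disjoint supports and within a line the suffix-product telescoping handles non-commutativity); and a bounded, $m$-independent treatment of $\delta_0(a)$ for $a\in A'$ using the infinite acting group. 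Your count $n+2$ is right.

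Two caveats. First, your two-commutator step yields the shape $[P,Q]\cdot[t_1,h]$ with $P,Q,h$ in the base group, which is perfectly adequate for the commutator-width bound but is \emph{not} the normal form $[a,t][b,t^2]$ recorded in the paper: there, both commutators have a top-group element as second entry, and the proof of Theorem \ref{thm_wreath_prod_abelian} relies on exactly this, rewriting $[a,t]$ and $[b,t^2]$ via palindromes in the abelian generators $t_1,\dots,t_n$. Your commutator $[P,Q]$ lies entirely in the base group and would not feed that palindromic argument; the standard way to get the stated shape is to perform your spread-and-collect along shifts by $t$ and $t^2$ rather than via a base-by-base commutator. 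Second, the lower bound $2\le cw(W)$ remains a sketch in your write-up: the claim that the image in $F^{\mathrm{ab}}$ of a single commutator $[fb,gc]$ has an augmentation-ideal shape in the $\mathbb{Z}[B]$-module $\bigoplus_B A^{\mathrm{ab}}$ requires the explicit computation, and you still owe an explicit element of $W'$ avoiding that shape. Since the paper proves neither direction itself, these gaps do not conflict with anything in the text, but they are the parts of \cite{akhavan_comm_wreath} you would need to carry out in full.
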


We use the shape of the commutators that emerges in the proof of Theorem \ref{thm_akhavan_wreath} to prove that the same wreath products also have finite palindromic width. The following result from \cite{bardakov} gives an upper bound for the palindromic width of a metabelian group.

\begin{theorem}\cite{bardakov}\label{thm_abelian}
Let $G$ be a non-abelian free abelian-by-nilpotent group of rank $n$. Then $pw(G,X) \leq 5n$ for a minimal generating set $X$.
\end{theorem}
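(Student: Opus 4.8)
The plan is to split $G$ along its defining extension and handle the two pieces separately. Write $G$ in the form $1 \to A \to G \xrightarrow{\pi} N \to 1$, where $N$ is the free nilpotent group of rank $n$ on the images $\bar x_1,\dots,\bar x_n$ of a minimal generating set $X$, and $A$ is the abelian normal subgroup, which I regard as a module over the group ring $\mathbb{Z}[N]$ via conjugation. Given $g \in G$, I would first treat $\pi(g)\in N$: since free nilpotent groups are already known to have finite palindromic width (Bardakov--Gongopadhyay, cited in the introduction), I can write $\pi(g)$ as a product of boundedly many palindromes in the $\bar x_i$, and because a palindromic word in $X$ stays palindromic when read in $G$, these lift verbatim to palindromes of $G$. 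Multiplying $g$ by their inverses — which are palindromes by Lemma \ref{lemma_overline}(b) — reduces the problem to the case $g \in A$. Everything then hinges on bounding the number of palindromes needed to express an arbitrary element of $A$ over the ambient alphabet $X$.

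Throughout I would rely on the reversal anti-automorphism $w\mapsto\overline w$. The central device is that $w\,\overline w$ is always a palindrome (since $\overline{w\,\overline w}=\overline{\overline w}\,\overline w=w\,\overline w$), and dually so is $\overline w\,w$; moreover reversal preserves the exponent sum of each generator, so $w$ and $\overline w$ have equal image in the abelianization. To control $A$, which is abelian yet must be written over the generators $X$ of the whole group (so that Lemma \ref{lemma_overline}(a) does not apply directly, as conjugation by $N$ scrambles palindromic structure), I would coordinatize it by Fox derivatives, i.e.\ via the Magnus embedding, identifying $g \in A$ with a tuple $(u_1,\dots,u_n)$ over $\mathbb{Z}[N]$ constrained by the fundamental relation $\sum_i (\bar x_i - 1)\,u_i = 0$ coming from the triviality of the image in $N^{ab}$. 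The point is that the Fox coordinates of a palindrome obey a skew-symmetry condition under the involution $\bar x_i \mapsto \bar x_i^{-1}$, and the $w\,\overline w$ construction lets me manufacture, for any prescribed skew-symmetric tuple, a genuine palindrome realizing it.

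The hard part will be the converse bookkeeping: showing that every tuple obeying the fundamental relation is a sum of a bounded number of skew-symmetric tuples, so that an arbitrary element of $A$ is a product of boundedly many palindromes. I would carry this out one coordinate direction at a time, each $[\bar x_i,\bar x_j]$-component being matched by a fixed number of skew-symmetric pieces — about five — and then add the palindromes spent on the nilpotent quotient, arriving at $pw(G,X)\le 5n$. The genuine difficulty, and the reason the argument is not merely formal, is keeping this per-coordinate constant independent of the nilpotency class of $N$ while respecting the single relation that couples the coordinates; that uniform decomposition of relation-satisfying tuples into palindromic (skew-symmetric) ones is where the real work lies.
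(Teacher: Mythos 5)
First, a point of orientation: the paper offers no proof of this statement at all --- it is imported verbatim from \cite{bardakov}, and according to this paper's own introduction the proof there rests on results about commutator width in nilpotent groups, not on the Magnus embedding. Your plan instead transplants the skew-symmetric-functions method that Riley and Sale \cite{rileySale} used for free \emph{metabelian} groups, i.e.\ the case where the quotient is abelian. That method depends essentially on commutativity of the quotient: there the Fox coordinates live over $\mathbb{Z}[\mathbb{Z}^n]$, a commutative ring carrying the involution $x_i \mapsto x_i^{-1}$, and the whole calculus of skew-symmetric functions is built on that. Over $\mathbb{Z}[N]$ with $N$ free nilpotent and non-abelian you face a non-commutative ring; the reversal map induces only an anti-automorphism of $N$ (it is the identity merely on the abelianization, as you note), and none of the Riley--Sale machinery is available off the shelf. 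Whether palindromes are even characterized by a skew-symmetry condition on their Fox coordinates in this setting is itself unestablished in your sketch.

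Second, and decisively: the one step you defer --- that every tuple satisfying $\sum_i (\bar x_i - 1)u_i = 0$ decomposes into a bounded number of skew-symmetric tuples, with the bound uniform in the nilpotency class --- \emph{is} the theorem. Everything preceding it (lifting palindromes from the nilpotent quotient, the $w\,\overline{w}$ device, Lemma \ref{lemma_overline}) is routine; you explicitly flag the decomposition as ``where the real work lies'' and supply no argument for it, so the proposal is a program rather than a proof. The arithmetic as stated also fails to return $5n$: roughly five skew-symmetric pieces for each of $n$ coordinate directions already costs $5n$ palindromes, and you must still add the palindromes spent on $\pi(g)$ in the free nilpotent quotient (itself a quantity of order $n$), so your accounting lands strictly above the claimed bound. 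To salvage the route you would need either a genuinely new uniform decomposition theorem over the non-commutative ring $\mathbb{Z}[N]$, or to abandon the Magnus-embedding picture and argue as in \cite{bardakov} via bounded commutator width in nilpotent groups.
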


Combining these two Theorems allows us to obtain a bound for the palindromic width of such wreath products.

\begin{theorem}\label{thm_wreath_prod_abelian}
The wreath product $F_d \wr \mathbb{Z}^n$ of the free $d$-generated group $F_d$ with $\mathbb{Z}^n$ has finite
palindromic width at most $5d+9n$ if $n$ is even and at most $5d+9n+2$ if $n$ is odd with respect to the natural generating set
coming from $F_d$ and $\mathbb{Z}^n$.
\end{theorem}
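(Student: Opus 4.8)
The plan is to dismantle an arbitrary $g\in W:=F_d\wr\mathbb Z^n$ in three layers that match the three tools at hand: the metabelian palindrome bound of Theorem~\ref{thm_abelian}, the commutator bound of Theorem~\ref{thm_akhavan_wreath}, and the reverse-word calculus of Lemma~\ref{lemma_overline}. Throughout I would lean on the elementary principle that a surjection $\phi\colon W\to Q$ carrying the natural generating set of $W$ onto a generating set of $Q$ sends palindromic words to palindromic words. Consequently, any factorisation of $\phi(g)$ into palindromes downstairs lifts, letter by letter, to a product of the corresponding palindromes upstairs, and their product agrees with $g$ modulo $\ker\phi$. This lets me replace $g$ by a representative of $\ker\phi$ at the cost of only a controlled number of palindromes.

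First I would apply this to the quotient $\phi\colon F_d\wr\mathbb Z^n\to\mathbb Z^d\wr\mathbb Z^n$ induced by abelianising the base, $F_d\to F_d^{\,ab}=\mathbb Z^d$. The target is a metabelian group of rank $d+n$, so Theorem~\ref{thm_abelian} bounds its palindromic width by $5(d+n)$; lifting the resulting palindromes reduces the problem to the case $g\in\ker\phi=\prod_{\mathbb Z^n}F_d'$. Since each coordinate of such an element lies in $F_d'=[F_d,F_d]$, this kernel is contained in $W'$, and so $g$ may now be assumed to lie in the commutator subgroup.

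Next I would invoke Theorem~\ref{thm_akhavan_wreath} with $A=F_d$ and $B=\mathbb Z^n$: every element of $W'$ is a product of at most $n+2$ commutators. Here I would track the explicit shape of these commutators as they emerge in Akhavan-Malayeri's argument, the point being to arrange each factor as $[u,v]$ with $u$ and $v$ \emph{palindromes} in the natural generators (one entry coming from the base and one a standard generator of $\mathbb Z^n$, which is automatically palindromic). Granting this, the identity
\[ [u,v]=u^{-1}\bigl(v^{-1}uv^{-1}\bigr)v^{2} \]
exhibits each commutator as a product of three palindromes: $u^{-1}$ and $v^{2}$ are palindromes because $u$ and $v$ are, while $\overline{v^{-1}uv^{-1}}=\overline{v^{-1}}\,\overline u\,\overline{v^{-1}}=v^{-1}uv^{-1}$ by Lemma~\ref{lemma_overline}(b).

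Finally I would assemble the count: the $n+2$ commutators contribute their middle factors $v_i^{-1}u_iv_i^{-1}$ directly, while the outer factors $u_i^{-1}$ and $v_i^{2}$ fall into the abelian directions of $W$ and can be merged with the palindromes produced in the first layer. Carrying out this bookkeeping is what converts the naive estimate into the stated bound $5d+9n$, the extra $+2$ for odd $n$ coming from the parity obstruction to realising a single element of $\mathbb Z^n$ as one palindrome in the standard generators (a palindrome fixes the parity of only one coordinate at a time). I expect the decisive difficulty to be exactly the third step: verifying that Akhavan-Malayeri's commutators can genuinely be taken with palindromic entries — or, failing that, absorbing the reverses $\overline{u_i},\overline{v_i}$ at the cost of only boundedly many extra palindromes — since this is where the special structure of the wreath product, rather than the soft lifting principle, must be used, and where the exact constant and its parity dependence are pinned down.
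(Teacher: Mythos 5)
Your first layer coincides exactly with the paper's decomposition: $F_d\wr\mathbb{Z}^n=(F_d^{ab}\wr\mathbb{Z}^n)\cdot B'$ with $B=\prod_{\mathbb{Z}^n}F_d$, the metabelian factor handled by Theorem \ref{thm_abelian} at cost $5(d+n)$, and the problem reduced to $B'$. The genuine gap is the step you yourself flag as the decisive difficulty and then leave open: Akhavan-Malayeri's commutators \emph{cannot} be arranged with palindromic entries. What his proof actually supplies (and what the paper quotes) is that every $w\in B'$ --- note: $B'$, not all of $W'$, so only \emph{two} commutators are needed, not $n+2$ --- has the special form $w=[a,t][b,t^2]$ with $t\in\mathbb{Z}^n$ and $a,b\in B$. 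The base entries $a,b$ are products of conjugates $a_i^{-1}f_ia_i$ and are not palindromes in the natural generators, and $t=t_1^{i_1}\cdots t_n^{i_n}$ is a product of $n$ palindromes but not a single one; so your identity $[u,v]=u^{-1}\bigl(v^{-1}uv^{-1}\bigr)v^2$, which needs \emph{both} entries palindromic, has no purchase, and your fallback of ``absorbing the reverses at boundedly many extra palindromes'' is precisely the content requiring an idea. The paper's idea is different from anything in your sketch: write $t^{-1}=t_n^{-i_n}\cdots t_1^{-i_1}$ and insert cancelling pairs $\overline{a^{-1}}\,\overline{a}=1$ and $aa^{-1}=1$ (Lemma \ref{lemma_overline}) between the blocks, so that
\[
[a,t]=\bigl(a^{-1}t_n^{-i_n}\overline{a^{-1}}\bigr)\cdot\bigl(\overline{a}\,t_{n-1}^{-i_{n-1}}a\bigr)\cdot\ldots\cdot\bigl(\overline{a}\,t_1^{-i_1}a\bigr)\cdot t_1^{i_1}\cdots t_n^{i_n},
\]
a product of $2n$ palindromes when $n$ is even: each block $u\,t_j^{-i_j}\,\overline{u}$ is a palindrome for an \emph{arbitrary} word $u$, which is exactly how the non-palindromic $a$ is neutralised, plus $n$ palindromes $t_j^{i_j}$.

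This construction also corrects your diagnosis of the parity term. The $+2$ for odd $n$ has nothing to do with an obstruction to representing elements of $\mathbb{Z}^n$ as palindromes (any such element is a product of $n$ palindromes regardless of parity); it arises because the alternation between blocks of type $a^{-1}\cdots\overline{a^{-1}}$ and $\overline{a}\cdots a$ only closes up cleanly when $n$ is even, and for odd $n$ one extra palindrome $\overline{a}a$ must be appended per commutator, giving $2n+1$ each, hence $4n+2$ for the pair $[a,t][b,t^2]$ and the stated $5d+9n+2$. Note finally that your bookkeeping could not have produced the stated constant anyway: $n+2$ commutators at three palindromes each yields $5(d+n)+3(n+2)=5d+8n+6$, not $5d+9n$, whereas the paper gets $5(d+n)+4n$ precisely because it uses the two-commutator form on $B'$ together with the interleaving trick above.
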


\begin{proof}
Set $B = \prod_{\mathbb{Z}^n} F_d$ to be the base group of the wreath product. We can split the wreath product into $(F_d^{ab} \wr
A) \cdot B'$. The first group $F_d^{ab} \wr \mathbb{Z}^n$ is a metabelian group which has finite palindromic width at most
$5(d+n)$ by Theorem \ref{thm_abelian}. In \cite{akhavan_comm_wreath} the author proves that every element $w$ of $B'$ is of the
form
\[w = [a,t][b,t^2], \quad \mbox{with} \quad a,b \in B,t \in \mathbb{Z}^n.\] Since $t$ is an element of an abelian group, it
can be written as the product of finitely many palindromes, $t = t_1^{i_1}\cdot \dots \cdot t_n^{i_n}$ with $i_j \in \mathbb{Z}$,
where $\left\{t_1, \dots, t_n\right\}$ is the finite generating set of $\mathbb{Z}^n$. This implies that $t^2= t_1^{2i_1}\cdot
\dots \cdot t_n^{2i_n}$. First assume that $n$ is even. Then we can write the first commutator as
\[[a,t_1^{i_1}\cdot \dots \cdot t_n^{i_n}] = a^{-1} (t_1^{i_1}\cdot \dots \cdot t_n^{i_n})^{-1} a t_1^{i_1}\cdot \dots \cdot t_n^{i_n}\]
\[= a^{-1} t_n^{-i_n} \overline{a^{-1}} \cdot \overline{a} t_{n-1}^{-i_{n-1}} a \cdot a^{-1} t_{n-2}^{-i_{n-2}} \overline{a^{-1}} \cdot \dots \cdot \overline{a} t_{1}^{-i_{1}} a \cdot t_1^{i_1} \cdot \dots \cdot t_n^{i_n}.\] 
This is now a product of $2n$ palindromes. If $n$ is odd, we write
\[[a,t_1^{i_1}\cdot \dots \cdot t_n^{i_n}] = a^{-1} (t_1^{i_1}\cdot \dots \cdot t_n^{i_n})^{-1} a t_1^{i_1}\cdot \dots \cdot
t_n^{i_n}\]
\[= a^{-1} t_n^{-i_n} \overline{a^{-1}} \cdot \overline{a} t_{n-1}^{-i_{n-1}} a \cdot a^{-1} t_{n-2}^{-i_{n-2}} \overline{a^{-1}}
\cdot \dots \cdot a^{-1} t_{1}^{-i_{1}} \overline{a^{-1}} \cdot \overline{a}a \cdot t_1^{i_1} \cdot \dots \cdot t_n^{i_n},\] 
which is a product of $2n+1$ palindromes. We can do the same for the second commutator, hence we need at most
$4n$ if $n$ is even and at most $4n+2$ palindromes if $n$ is odd to express every element of $B'$. In total we need at
most $5(d+n)+4n=5d+9n$ palindromes if $n$ is even and at most $5d+9n+2$ palindromes if $n$ is odd. These are palindromes with
respect to the natural generating set of $F_d \wr \mathbb{Z}^n$.
\end{proof}

The property of a group of having finite palindromic width with respect to some generating set carries over to its quotients.

\begin{lemma}\label{lemma_pw_in_quotients}
Let $G$ be a group with finite palindromic width $pw(G,X)$ with respect to $X$. If $N$ is a normal subgroup of $G$, then
$pw(G/N,\hat{X}) \leq pw(G,X)$ where $\hat{X}$ denotes the image of $X$ under the natural homomorphism $G\rightarrow G/N$.
\end{lemma}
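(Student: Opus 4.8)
The plan is to use the surjectivity of the natural quotient homomorphism $\pi : G \to G/N$ together with the single observation that $\pi$ carries palindromic words in $X$ to palindromic words in $\hat{X}$. Concretely, I would begin by fixing an arbitrary element $\bar g \in G/N$ and choosing a preimage $g \in G$ with $\pi(g) = \bar g$, which exists because $\pi$ is surjective.

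Next I would invoke the hypothesis that $G$ has palindromic width $pw(G,X)$: this lets me write $g = p_1 p_2 \cdots p_m$ with $m \leq pw(G,X)$, where each $p_i$ is a palindrome in the alphabet $X$, say $p_i = \prod_{j=1}^{k_i} x_{i_j}^{e_j}$ with $x_{i_j} = x_{i_{k_i - j + 1}}$ and $e_j = e_{k_i - j + 1}$ for all $j$, in accordance with the definition of a palindromic word given above.

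The key step is then to apply $\pi$ and check that palindromicity is preserved. Since $\pi$ is a homomorphism sending each $x \in X$ to $\hat x := \pi(x) \in \hat X$, we have $\pi(p_i) = \prod_{j=1}^{k_i} \hat x_{i_j}^{e_j}$, and the defining symmetry relations $x_{i_j} = x_{i_{k_i-j+1}}$ and $e_j = e_{k_i-j+1}$ pass directly to $\hat x_{i_j} = \hat x_{i_{k_i-j+1}}$ and $e_j = e_{k_i-j+1}$. Hence each $\pi(p_i)$ is a palindrome in $\hat X$, so $\bar g = \pi(g) = \pi(p_1)\cdots \pi(p_m)$ is a product of at most $m \leq pw(G,X)$ palindromes. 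As $\bar g$ was arbitrary, this yields $pw(G/N, \hat X) \leq pw(G,X)$.

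I do not anticipate a genuine obstacle; the only point requiring care is that a palindrome is a property of a \emph{word} rather than of a group element, so the argument must track the actual palindromic word representing $g$ and verify that its homomorphic image remains palindromic. One should also note that the elements of $\hat X$ need not be distinct (some $x \in X$ may map to $1$, or two generators may coincide in $G/N$), but this is harmless: the palindromic symmetry of the word survives any such collapse, so the bound holds regardless.
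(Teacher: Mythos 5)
Your proof is correct: lifting $\bar g$ to $g \in G$, writing $g$ as a product of at most $pw(G,X)$ palindromes in $X$, and observing that the quotient homomorphism preserves the palindromic symmetry of each word is exactly the canonical argument this lemma rests on. The paper omits the proof entirely (treating the statement as immediate), and your write-up --- including the careful remark that palindromicity is a property of the word, which survives any collapse of generators in $G/N$ --- simply makes that implicit argument explicit.
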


Every finitely generated group can be obtained as a quotient of some free group. This now implies that Theorem
\ref{thm_wreath_prod_abelian} in fact holds for any wreath product $G \wr \mathbb{Z}^n$, where $G$ is a finitely generated group.

\begin{cor}
The wreath product $G \wr A$ of any $d$-generated group $G$ with a finitely $n$-generated abelian group $A$ has finite palindromic
width at most $5d+9n$ if $n$ is even and at most $5d+9n+2$ if $n$ is odd with respect to the natural generating set coming from
$A$ and $G$.
\end{cor}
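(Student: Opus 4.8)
The plan is to follow the proof of Theorem~\ref{thm_wreath_prod_abelian} but to isolate, and then remove, the two points at which the freeness of the acting group is used. Write $B=\bigoplus_A G$ for the base group, so that $B'=\bigoplus_A G'$ is normal in $G\wr A$ and the quotient is $(G\wr A)/B'\cong G^{ab}\wr A$. Given $g\in G\wr A$, I would first write its image $\bar g$ in $G^{ab}\wr A$ as a product of palindromes, lift each palindrome to the same word in the generators of $G\wr A$ (a palindrome lifts to a palindrome), and observe that the product of these lifts equals $g$ times some $b'\in B'$. It then remains to bound (A) the palindromic width of the metabelian quotient $G^{ab}\wr A$, and (B) the number of palindromes needed to express an arbitrary element of $B'$; the total is the sum of the two bounds.

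For (A) the key observation is that $G^{ab}\wr A$ is a quotient of $\mathbb{Z}^d\wr\mathbb{Z}^n=F_d^{ab}\wr\mathbb{Z}^n$, even when $A$ has torsion. Choosing surjections $\mathbb{Z}^d\twoheadrightarrow G^{ab}$ and $\mathbb{Z}^n\twoheadrightarrow A$, I would check that the induced map which folds the base along the fibres of $\mathbb{Z}^n\twoheadrightarrow A$ is a homomorphism. The only relations that could fail are those which identify two base coordinates lying over the same point of $A$; in $\mathbb{Z}^d\wr\mathbb{Z}^n$ these say that the two coordinate copies commute, and in the image the two copies merge into one copy of $G^{ab}$, so the relation survives precisely because $G^{ab}$ is abelian. (This is exactly the step that breaks for non-abelian $G$, which is why one cannot push torsion into the base.) Since $\mathbb{Z}^d\wr\mathbb{Z}^n$ is free abelian-by-nilpotent and $(d+n)$-generated, Theorem~\ref{thm_abelian} gives palindromic width at most $5(d+n)$, and Lemma~\ref{lemma_pw_in_quotients} transfers this bound to $G^{ab}\wr A$ for every finitely generated abelian $A$.

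For (B) I would first note that the rewriting of a commutator into palindromes carried out in Theorem~\ref{thm_wreath_prod_abelian} is purely formal: for any abelian $A$, any $a\in B$, and $t=t_1^{i_1}\cdots t_n^{i_n}$, the identities $\overline{t_j^{i_j}}=t_j^{i_j}$ and $\overline{a^{-1}}\,\overline a=1$ from Lemma~\ref{lemma_overline} turn $[a,t]$ into at most $2n$ (resp. $2n+1$) palindromes, with no use of freeness. Hence it suffices to produce the commutator normal form $[a,t][b,t^2]$ with $t\in A$ and $a,b\in B$, which for free abelian $A$ is Theorem~\ref{thm_akhavan_wreath}. To extend this to an arbitrary infinite finitely generated abelian group I would fix a single element $t\in A$ of infinite order and write $A=\langle t\rangle\oplus A_1$. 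Because $t$ shifts only the $\langle t\rangle$-coordinate, the base $B$ splits as a direct sum of $t$-invariant lines $\bigoplus_{\langle t\rangle}G\cong\bigoplus_{\mathbb{Z}}G$, one for each $\alpha\in A_1$, and $[a,t],[b,t^2]$ act on these lines independently. On each line the construction of Theorem~\ref{thm_akhavan_wreath} (pulled back through $F_d\wr\mathbb{Z}\twoheadrightarrow G\wr\mathbb{Z}$ to cover arbitrary $G$) expresses the corresponding component of a given $w\in B'$ as $[a^{(\alpha)},t][b^{(\alpha)},t^2]$; assembling the finitely many nonzero lines into $a=\bigoplus_\alpha a^{(\alpha)}$ and $b=\bigoplus_\alpha b^{(\alpha)}$ yields $w=[a,t][b,t^2]$ globally. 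Crucially the transverse group $A_1$, free part and torsion part alike, enters only as a passive index for the lines and is never telescoped against, so its torsion causes no difficulty.

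Combining, every element of $B'$ is at most $4n$ (resp. $4n+2$) palindromes, and together with (A) every $g\in G\wr A$ is a product of at most $5(d+n)+4n=5d+9n$ (resp. $5d+9n+2$) palindromes, as claimed. The one case this argument does not reach is that of a \emph{finite} abelian $A$: there is then no infinite-order element to telescope against, the line-by-line reduction of (B) collapses, and for a group such as $G=F_d$ of infinite commutator width one cannot bound the number of commutators in $B'$ uniformly. I expect this to be the main obstacle, and handling it would require a genuinely different argument (the finite-group techniques developed later in the paper assume the acting group is non-abelian and so do not apply directly); for every infinite finitely generated abelian $A$, including those with torsion, the plan above gives the stated bound.
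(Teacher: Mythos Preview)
Your argument takes the Corollary at face value and reworks the proof of Theorem~\ref{thm_wreath_prod_abelian} so as to allow torsion in $A$. The paper does something much simpler: the sentence immediately preceding the Corollary observes that a $d$-generated $G$ is a quotient of $F_d$, whence $G\wr\mathbb{Z}^n$ is a quotient of $F_d\wr\mathbb{Z}^n$, and Lemma~\ref{lemma_pw_in_quotients} transfers the bound of Theorem~\ref{thm_wreath_prod_abelian} verbatim. That is the entire argument. In particular the paper's proof treats only free abelian $A$; the remark following the Corollary explicitly warns that quotienting $G\wr\mathbb{Z}^n$ by the normal closure of a nontrivial $N\lhd\mathbb{Z}^n$ does \emph{not} yield $G\wr(\mathbb{Z}^n/N)$ but collapses to something metabelian, so the paper's method cannot reach non-free $A$ despite the wording of the statement.

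Your approach is therefore genuinely different and, for infinite $A$, strictly stronger than what the paper actually proves. Two small remarks on your write-up. In (A), $\mathbb{Z}^d\wr\mathbb{Z}^n$ is not itself a free abelian-by-nilpotent group; what you need (and what the paper tacitly uses in the proof of Theorem~\ref{thm_wreath_prod_abelian}) is that it is metabelian on $d+n$ generators, hence a quotient of the free metabelian group of that rank, so Theorem~\ref{thm_abelian} applies via Lemma~\ref{lemma_pw_in_quotients}. In (B), to split $A=\langle t\rangle\oplus A_1$ you must choose $t$ to be a basis element of the free part of $A$, not an arbitrary infinite-order element; with that choice your line-by-line reduction to Akhavan--Malayeri's normal form over $\mathbb{Z}$ is sound (and since a single generator $t$ is used, the palindrome count for $[a,t][b,t^2]$ is in fact better than the $4n$ you quote). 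Your caveat about finite abelian $A$ is exactly right and matches the paper's own disclaimer. In short: the paper's route is a one-line quotient argument limited to free $A$, while yours reopens the commutator construction and genuinely extends the conclusion to all infinite finitely generated abelian $A$.
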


We note however, that this does not imply that $G \wr A$ has finite palindromic width if $A$ is not free abelian. In fact,
consider $Q =
(G \wr A)/\left<\left<N\right>\right>^{G \wr A}$, where $N \lhd A$ and $\left<\left<N\right>\right>$ denotes the normal closure of
$N$ in $G \wr A$. Then for any non-trivial normal subgroup $N \lhd A$, the quotient $Q$ becomes metabelian.

\medskip

If we assume that the group $B$ has finite commutator width and $A$ is an infinite finitely generated abelian group, then it is
rather easy to construct palindromes without using the presentation of the commutators from \cite{akhavan_comm_wreath}. Hence
this result also holds if $A$ is not free.

\begin{theorem}
Let $C$ be a $d$-generated group with finite commutator width $cw(C)=n$ and $A$ an infinite abelian group of rank $r$ with
generating set $X$ such that $|X|=r$. Then \[pw(C \wr A, X \cup S) \leq 5d+6r + 7n\] for any chosen generating set $S$
of $C$.
\end{theorem}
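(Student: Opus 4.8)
The plan is to peel the problem into two layers using the exact sequence $1\to B'\to C\wr A\to C^{\mathrm{ab}}\wr A\to 1$, where $B=\bigoplus_A C$ is the base group. First note that $B'=\bigoplus_A C'$, so that $(C\wr A)/B'\cong C^{\mathrm{ab}}\wr A$ is metabelian, being the wreath product of the two abelian groups $C^{\mathrm{ab}}$ and $A$. Since $C^{\mathrm{ab}}$ is $d$-generated and $A$ is $r$-generated, $C^{\mathrm{ab}}\wr A$ is a quotient of the free metabelian group of rank $d+r$, which by Theorem~\ref{thm_abelian} has palindromic width at most $5(d+r)$; by Lemma~\ref{lemma_pw_in_quotients} the bound $5d+5r$ then holds for $C^{\mathrm{ab}}\wr A$ with respect to the images of $X\cup S$. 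Given $g\in C\wr A$, I would first write its image in the quotient as a product of at most $5d+5r$ palindromes and lift these words verbatim to $C\wr A$; they remain palindromes in $X\cup S$, and multiplying $g$ by their inverse leaves a factor $b\in B'$ still to be expressed.

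For the second layer I use the hypothesis $cw(C)=n$. Every coordinate of $b\in\bigoplus_A C'$ is a product of at most $n$ commutators in $C$, and since commutators in the direct sum $B$ act coordinatewise, these can be collected into $n$ commutators of base elements, i.e.\ $b=\prod_{j=1}^n[F_j,G_j]$ with $F_j,G_j\in B$ supported on the (finite) support of $b$. Thus it remains only to bound, independently of $F$ and $G$, the number of palindromes needed to express a single base commutator $[F,G]$.

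This last step is the crux and the main obstacle, since both entries of $[F,G]$ lie in the base rather than in $A$, so the splitting used for $[a,t]$ in the proof of Theorem~\ref{thm_wreath_prod_abelian} cannot be applied directly: splitting a base element into its generators would cost unboundedly many palindromes. The idea is to buy the bounded count back from the fact that $A$ is infinite. I would choose $t\in A$ of infinite order (which exists since $A$ is finitely generated and infinite) and an exponent $N$ so large that $\mathrm{supp}(F)$, $\mathrm{supp}(G)$ and their $t^N$-shifts are pairwise disjoint. Conjugation by $t^N$ then moves supports apart without interference, and one rewrites $[F,G]$ as a short telescoping product whose factors have the palindromic form $\overline{w}\,t^{kN}w$ or $w\,t^{kN}\overline{w}$ (palindromes by the reversal argument of Theorem~\ref{thm_wreath_prod_abelian} together with Lemma~\ref{lemma_overline}), with the internal $\overline{w}w$-type cancellations arranged exactly as in the $[a,t]$ computation and the disjointness guaranteeing that the reversed blocks telescope to the identity. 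I expect this to cost at most seven palindromes per commutator, the residual powers of $t$ accumulating into a single element of $A$ that is itself a product of at most $r$ palindromes (one per generator, by Lemma~\ref{lemma_overline}(a)). Verifying that the telescoping reproduces $[F,G]$ \emph{exactly}, rather than $[F,G]$ corrupted by a base element supported on the shifted coordinates, is the delicate computation I would carry out with care.

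Putting the layers together yields at most $5d+5r$ palindromes for the metabelian quotient, $7n$ for the $n$ base commutators, and $r$ for the accumulated abelian factor, for a total of $5d+6r+7n$, as claimed. I would finally double-check that every palindrome produced is genuinely palindromic in the alphabet $X\cup S$ and not merely in some coarser generating set, since both the lifting in the first layer and the reversed blocks $\overline{w}$ in the second must be read letter by letter in $X\cup S$.
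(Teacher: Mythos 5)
Your proposal follows essentially the same route as the paper: split off the metabelian quotient $C^{\mathrm{ab}}\wr A$ (bounded by $5(d+r)$ via Theorem \ref{thm_abelian} and Lemma \ref{lemma_pw_in_quotients}), collect the coordinatewise commutators into $n$ commutators $[\kappa_j,\tau_j]$ of base elements, and render each as seven palindromes of the form $\overline{w}\,s\,w$ using high powers $s=x^q$, $t=x^y$ of an infinite-order generator, with disjointness of the shifted reflected supports making the reversed blocks cancel by Lemma \ref{lemma_overline}. The delicate telescoping you defer is exactly the paper's displayed identity \eqref{eq_comm_fin}, which works as you anticipate --- there the shift powers cancel exactly within the seven palindromes, and the extra $r$ palindromes are spent on the $A$-component $a$ rather than on residual powers, a bookkeeping difference that leaves your total $5d+6r+7n$ intact.
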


\begin{proof}
We write $C \wr A$ as $\left(C^{ab} \wr A\right) \cdot \left(C'\wr A\right)$. The first factor has finite
palindromic width at most $5(d+r)$ by Theorem \ref{thm_abelian}. For $C' \wr A$ we do something similar as in the proof of
Theorem \ref{thm_der_group}. We want to write an element $g$ as a product of finitely many palindromes. Similar to \eqref{eq_g1},
assume $g$ is given as the element
\[g= a \cdot \prod_{i=1}^k a_i^{-1} b_i a_i, \quad b_i = \prod_{j=1}^{n_i} \left[f_{i,j},g_{i,j}\right], \quad n_i \in \mathbb{Z},
b_i, f_i, g_i \in C \times 1.\]
We have to find two elements $s,t \in A$ such that $a_i \neq a_i^{-1}s$ and $a_i \neq a_i^{-1}t
\neq a_i^{-1}s$. Since $A$ is infinite abelian this can always be done with $s$ and $t$ both being a different high power of
some infinite order generator $x \in X$. Assume $s = x^q$ and $t = x^y$ for some $q,y \in \mathbb{Z}$. We rewrite $g$ with
\[\kappa_j = \prod_{i=1}^k a_{i,j}^{-1} f_{i,j} a_{i,j}, \quad \tau_j = \prod_{i=1}^k a_{i,j}^{-1} g_{i,j} a_{i,j}, \quad
h = \prod_{j=1}^n \kappa_j^{-1} \tau_j^{-1} \kappa_j \tau_j\]
as $g = a\cdot h$. We can also write $g$ as 
\begin{equation}\label{eq_comm_fin}
g = a\cdot  \prod_{j=1}^n \kappa_j^{-1} s^{-1} \overline{\kappa_j^{-1}} \cdot s \cdot \tau_j^{-1} t^{-1}
\overline{\tau_j^{-1}} \cdot t \cdot s^{-1} \cdot \overline{\kappa_j} s \kappa_j \cdot t^{-1} \cdot \overline{\tau_j} t
\tau_j.\end{equation} 
The element $ts^{-1}$ is given by $x^{y-q}$, hence it is a palindrome. We see that this procedure needs at
most $r$ palindromes for $a$ and $7n$ palindromes for the product of the commutators in \eqref{eq_comm_fin}. Together we can write
every element of $C' \wr H$ under the given hypothesis as a product of at most $r+7n$ palindromes. With the $5(d+r)$ palindromes
from $C^{ab} \wr A$ we hence need at most $5(d+r)+r+7n=5d+6r+7n$ palindromes to express every element of $C \wr A$.
\end{proof}

\section{Wreath Product with a Non-Abelian Group}

Let $H$ be a $d$-generated group and $G$ a group with finite palindromic width with respect to a generating set $X$. In this
Section we show that every element of the subgroup $H' \wr G$ is a product of uniformly bounded finitely many palindromes with
respect to a chosen generating set of $H \wr G$. As a Corollary we deduce that $G \wr K$ has finite palindromic width for all
non-abelian finite groups $K$.

\medskip

Our main Theorem uses that we can find a word $r$ in the generators of $G$ such that $r=1$, but $\overline{r}\neq 1$. As we will
see in Lemma \ref{lemma_change_gen_set}, this can be achieved in any non-abelian group of finite palindromic width by eventually
adding one element to the generating set. The following example demonstrates that this condition on a word $r$ is easily fulfilled
by many groups.

\begin{example}\label{ex_BS}
Consider the group $BS(n,m) = \left<a,b\h \mid \h a^{-1}b^n a = b^m\right>$. Clearly $a^{-1}b^nab^{-m}=1$, but $b^{-m}ab^na^{-1}
\neq 1$.
\end{example}

\begin{lemma}\label{lemma_change_gen_set}
Assume $G$ is a finitely generated group with finite palindromic width with respect to the generating set $X$. Then $G$ has a
presentation with generating set $X \cup \{c\}$, in which there exists a relation $q$ in $G$ such that $\overline{q}\neq 1$ in
$G$, where $c \in G$ and possibly $c=1$.
\end{lemma}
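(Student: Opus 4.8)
The plan is to produce the required relation explicitly by adjoining a single new generator equal to a product of two non-commuting letters; the only property of $G$ that I will really use is that it is non-abelian. I first record why non-abelianness is needed at all: if $G$ were abelian, then Lemma \ref{lemma_overline}(a) gives $\overline{q}=q$ in $G$ for every word $q$, so no relation could possibly have non-trivial reverse. I therefore read the hypothesis, in line with the paragraph preceding the statement (``any non-abelian group of finite palindromic width''), as including that $G$ is non-abelian.

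Since $X$ generates the non-abelian group $G$, its elements cannot all commute pairwise, for otherwise $\langle X\rangle = G$ would be abelian. So I can fix $u,v \in X$ with $uv \neq vu$ in $G$; note in particular that $uv \neq 1$. I then set $c := uv \in G$ and enlarge the generating set to $X \cup \{c\}$. This is again a generating set, as $X$ alone already generates $G$, and by the remark closing Section 2 the palindromic width does not increase. In the presentation on $X \cup \{c\}$ the word $q := c^{-1} u v$ is a relation, i.e.\ $q = 1$ in $G$, precisely because $c = uv$.

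It then remains only to reverse $q$. Reading the three syllables $c^{-1}, u, v$ of $q$ in the opposite order while keeping their exponents, as in the definition of $\overline{\,\cdot\,}$, gives $\overline{q} = v\,u\,c^{-1}$, which evaluates in $G$ to $vu(uv)^{-1} = vuv^{-1}u^{-1}$. This element is trivial if and only if $uv = vu$, which we arranged to fail, so $\overline{q} \neq 1$ in $G$, as required. The clause ``possibly $c = 1$'' simply allows the case in which a suitable relation already exists in the alphabet $X$ (equivalently, the relators of $G$ are not closed under reversal), so that no new generator is needed; the construction above handles every non-abelian $G$ uniformly with $c = uv$.

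I do not expect a genuine obstacle here: the argument is short, and the only points requiring care are the bookkeeping with the reversal convention (reverse the order of the syllables, preserve each exponent) and the explicit acknowledgement that the statement fails for abelian $G$, so that the standing non-abelian assumption is genuinely used in the very first step.
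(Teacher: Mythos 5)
Your proof is correct, but it takes a genuinely different and more direct route than the paper's. The paper works with a relation that $G$ already has: it uses finite palindromic width to argue $G$ has no non-abelian free quotients, deduces that some relation $r$ contains a non-commuting pair of generators, massages $r$ so that the pair occurs as a subword $xy$ (the ``swap $x$ and $t$'' step), and then runs a case analysis --- if $\overline{r}\neq 1$ one takes $c=1$, and otherwise one rewrites the \emph{same} relation as $r=w_1cw_2$ with $c=xy$, observing that $\overline{w_2}\,c\,\overline{w_1}=1$ together with $\overline{w_2}\,yx\,\overline{w_1}=1$ would force $xy=yx$. You instead manufacture the relation outright: adjoin $c:=uv$ for a non-commuting pair $u,v\in X$, take the defining relator $q=c^{-1}uv$, and compute $\overline{q}=vuc^{-1}=vuv^{-1}u^{-1}\neq 1$ directly. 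Your version buys simplicity and strictly greater generality --- it never uses finite palindromic width (only non-abelianness, which you rightly flag as the hypothesis actually needed, since for abelian $G$ every relation reverses to itself), and it avoids the paper's more delicate steps, namely the free-quotient argument and the ``without loss of generality $xy$ occurs as a subword'' manipulation. What you give up is the possibility $c=1$: the paper's case analysis leaves the generating set unchanged whenever $G$ already has a relation with non-trivial reverse, while your construction always adds a genuine new generator. Since the lemma only asserts ``possibly $c=1$,'' this is a permitted trade, and your reading of that clause is exactly right; your reversal bookkeeping (reverse the syllable order $c^{-1},u,v$ to $v,u,c^{-1}$, preserving exponents) also matches the paper's definition of $\overline{w}$.
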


\begin{proof}
Since $G$ has finite palindromic width, it cannot have free quotients. Hence for every pair of non-commuting generators $x$ and
$y$, there exists a relation $w$, in which $x$ and $y$ occur. Without loss of generality we can assume that $x$ and $y$ occur as
subword $xy$. If there is a subword $t$ between $x$ and $y$, then either take the first generator used in $t$
that does not commute with $x$, or otherwise swap $x$ and $t$ in $r$. So we have
\[r=w_1 xy w_2=1.\] If also $\overline{r} = \overline{w_2} yx \overline{w_1}=1$, then we introduce a new generator $c=xy$ and get
\[r = w_1 c w_2, \quad \overline{r} = \overline{w_2} c \overline{w_1} = \overline{w_2} xy \overline{w_1}.\]
Now assume that also under the new generating set $X \cup \left\{c\right\}$ we have that $\overline{r}=1$. This implies
\[\overline{w_2} yx \overline{w_1} = 1 = \overline{w_2} xy \overline{w_1},\] hence $x$ and $y$ commute, a contradiction.
\end{proof}

It is yet unclear how or if the property of having finite palindromic width depends on the chosen generating set. However, as we
will see in the following Proposition, enlarging the generating set can only reduce the palindromic width or leave it unchanged.

\begin{prop}
Let $G$ be a group which has finite palindromic width with respect to the generating set $X$. If $Y \subset G$ is any finite
subset of $G$, then $pw(G, X \cup Y) \leq pw(G,X)$.
\end{prop}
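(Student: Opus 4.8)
The plan is to exploit the monotonicity of palindromic width under enlargement of the generating set, which reduces to a single structural observation: being a palindrome is a property of the word itself, not of the ambient alphabet. First I would note that $X \cup Y$ is a generating set for $G$, since it contains $X$, so that $pw(G, X \cup Y)$ is well-defined and it makes sense to compare it with $pw(G,X)$.

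The crucial step is to check that any palindromic word over $X$ is still a palindromic word over $X \cup Y$. Recall that a word $w = \prod_{j=1}^k x_{i_j}^{p_j}$ is declared to be a palindrome precisely when $x_{i_j} = x_{i_{k-j+1}}$ and $p_j = p_{k-j+1}$ for all $j$. This symmetry condition refers only to the letters actually occurring in $w$ and makes no reference to which larger alphabet one regards $w$ as living in. Hence if every letter of $w$ lies in $X \subseteq X \cup Y$, then $w$ satisfies the very same symmetry condition when read over the alphabet $X \cup Y$, and so is a palindrome there as well. Consequently, the set of palindromes with respect to $X$ is contained in the set of palindromes with respect to $X \cup Y$.

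With this in hand I would finish as follows. Fix an arbitrary $g \in G$. Since $pw(G,X)$ is finite, I may write $g = \pi_1 \cdots \pi_m$ as a product of $m \leq pw(G,X)$ palindromes $\pi_1, \ldots, \pi_m$ in the alphabet $X$. By the observation above, each $\pi_i$ is also a palindrome in the alphabet $X \cup Y$, so this very product exhibits $g$ as a product of at most $pw(G,X)$ palindromes over $X \cup Y$. Taking the supremum over all $g \in G$ then yields $pw(G, X \cup Y) \leq pw(G,X)$, as required.

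I do not expect a genuine obstacle here: the inequality is one-directional, and the only thing that could conceivably go wrong would be a failure of palindromicity to transfer to the larger alphabet, which the definition rules out outright. In particular, no new relations or cancellations involving the added elements $Y$ can force more palindromes, since the argument never uses $Y$ at all — it merely reuses the factorization already available over $X$, the extra generators supplying only additional, unneeded flexibility.
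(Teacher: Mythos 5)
Your proposal is correct and is essentially the paper's own argument spelled out in detail: both rest on the single observation that every palindrome over $X$ remains a palindrome over $X \cup Y$, so any factorization into at most $pw(G,X)$ palindromes transfers verbatim to the larger alphabet.
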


\begin{proof}
This follows immediately because the set of palindromes in $G$ with respect to $X \cup Y$ contains the set of palindromes of $G$
with respect to $X$.
\end{proof}

We will now see how we can use this to prove that at every element of the subgroup $F_d' \wr H$ can be expressed as a
product of finitely many palindromes.

\begin{theorem}\label{thm_der_group}
Let $F_d$ be a free $d$-generated group and $G$ a non-abelian group with $pw(G,X)=m$. Then every element of the subgroup $F_d' \wr
G$ is a product of at most $m+1$ palindromes with respect to $X \cup \left\{c\right\} \cup Y$, where $c \in G$, possibly $c=1$.
\end{theorem}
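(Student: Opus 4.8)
The plan is to split an element $g\in F_d'\wr G$ along the semidirect decomposition $F_d\wr G=\left(\prod_G F_d\right)\rtimes G$ and to treat the top part and the base part separately, spending $m$ palindromes on the former and a single palindrome on the latter. Write $g=a\cdot b$ with $a=\varepsilon(g)\in G$ the image under the canonical projection $\varepsilon\colon F_d\wr G\to G$ and $b=a^{-1}g\in\prod_G F_d'$. Since $pw(G,X)=m$, any element of $G$ is a product of at most $m$ palindromes in the letters $X$, and these remain palindromes in $F_d\wr G$ with respect to $X\cup\{c\}\cup Y$. Thus it suffices to produce a single palindrome $P$ in $F_d\wr G$ whose base part $\varepsilon(P)^{-1}P$ equals $b$; its top part $\varepsilon(P)\in G$ is unconstrained, so writing $g=\left(a\,\varepsilon(P)^{-1}\right)P$ reduces the top factor to the element $a\,\varepsilon(P)^{-1}\in G$, expressible by at most $m$ palindromes. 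The whole problem therefore concentrates in realising an arbitrary base element $b\in\prod_G F_d'$ as the base part of one palindrome.

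The workhorse is the following observation, immediate from Lemma \ref{lemma_overline}: for any word $W$ and any palindrome $p$, the word $W\,p\,\overline W$ is again a palindrome, since $\overline{W\,p\,\overline W}=\overline{\overline W}\,\overline p\,\overline W=W\,p\,\overline W$. Reading such a word as a motion through $G$ that deposits elements of $F_d$ along the way, the forward pass $W$ and the return pass $\overline W$ deposit mirror-image data, while the central block $p$ deposits one palindromic value cleanly at the position $\varepsilon(W)$. The reason for working in $F_d'\wr G$ rather than $F_d\wr G$ is that every value to be deposited lies in $F_d'$ and is a product of commutators in $F_d$; this is exactly the slack that will let the unwanted mirror deposits cancel.

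The construction of the single base palindrome then proceeds as follows. Writing $b$ as in \eqref{eq_g1} in the form $b=\prod_{i=1}^k a_i^{-1}(f_i,1)a_i$ with $f_i\in F_d'$ and $a_i\in G$, I route one forward path that visits the positions $a_1,\dots,a_k$ and deposits the required $f_i$, and let the return path deposit the forced reversed values $\overline{f_i}$. The essential device is Lemma \ref{lemma_change_gen_set}: after adjoining $c$ there is a relation $q$ with $q=1$ but $\overline q\neq 1$ in $G$, so inserting $q$ into the forward word is invisible to the forward motion in $G$ yet shifts the return motion by $\overline q$. This desynchronises the two passes, so that each mirror deposit $\overline{f_i}$ (which again lies in $F_d'$, since $\overline{[u,v]}=[\overline v^{-1},\overline u^{-1}]$) lands at a controlled auxiliary position where it telescopes against the neighbouring deposit in the chain, the whole cancellation closing up because the word has the palindromic form $W\,p\,\overline W$. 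The non-abelianness of $G$ enters precisely here, guaranteeing that $\varepsilon(W)\overline{\varepsilon(W)}$ and the junk positions are genuinely distinct from the $a_i$ (compare Example \ref{ex_BS}), so that the cancellations do not disturb the intended deposits.

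The step I expect to be the main obstacle is exactly this last cancellation: showing that a single forward/return pass with one central block can be arranged so that every mirror deposit is annihilated and the net base element is exactly $b$, no matter how large the support of $b$ is. This is what makes the bound $m+1$ independent of the commutator length of $b$, and it is the reason both the hypothesis $f_i\in F_d'$ (rather than merely $f_i\in F_d$) and the non-abelian relation $q$ are indispensable. Once the base part is realised by this one palindrome and the modified top part by $m$ palindromes, concatenation expresses $g$ as a product of at most $m+1$ palindromes in $X\cup\{c\}\cup Y$, as claimed.
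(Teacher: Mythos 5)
Your scaffolding matches the paper's: the splitting $g=a\cdot b$ with $m$ palindromes spent on the $G$-part, Lemma \ref{lemma_change_gen_set} supplying a relation $q$ with $q=1$ but $\overline{q}\neq 1$, a single palindrome of the shape ``word times its own reverse'' for the base part, and the recognition that the commutator structure of $F_d'$ together with the desynchronising relation must somehow annihilate the mirror deposits. But the step you explicitly defer --- arranging that ``every mirror deposit is annihilated'' --- is the entire content of the proof, and the mechanism you sketch for it would fail. You treat each $f_i\in F_d'$ as a single block, deposit it on the forward pass, and hope the mirror deposit $\overline{f_i}$ ``telescopes against the neighbouring deposit in the chain'' at an auxiliary position. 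In the base group $\prod_G F_d$, deposits at distinct coordinates commute but can never cancel one another, and two consecutive mirror deposits $\overline{f_i},\overline{f_{i+1}}$ sharing an auxiliary position multiply to $\overline{f_i}\,\overline{f_{i+1}}$, which is not trivial for arbitrary $f_i$; the fact that $\overline{f_i}$ again lies in $F_d'$ (your identity $\overline{[u,v]}=[\overline{v}^{-1},\overline{u}^{-1}]$) gives no cancellation by itself. Nothing in your setup produces an inverse deposit at the position where each mirror deposit lands.

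The paper's cancellation is intra-commutator, not inter-block. Writing $b_i=\prod_j[f_{i,j},g_{i,j}]$, it inserts the relation $r$ \emph{between the four letters of each commutator}, forming blocks of the shape
\[\kappa = a_i^{-1}\, f_{i,j}^{-1}\, r^{-1}\, g_{i,j}^{-1}\, r\, f_{i,j}\, r^{-1}\, g_{i,j}\, r\, a_i,\]
which forward still evaluates to $a_i^{-1}[f_{i,j},g_{i,j}]a_i$ because $r=1$ in $G$. In the reversed word $\overline{\kappa}$, however, the four mirror letters split between the two \emph{distinct} coordinates $\overline{a_i^{-1}}$ and $\overline{r^{-1}}\cdot\overline{a_i^{-1}}$ --- distinct precisely because $\overline{r}\neq 1$ --- where they cancel in pairs $\overline{f_{i,j}}\cdot\overline{f_{i,j}^{-1}}=1$ and $\overline{g_{i,j}}\cdot\overline{g_{i,j}^{-1}}=1$ by Lemma \ref{lemma_overline}. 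Hence each $\overline{\kappa}$, and so the whole reverse $\overline{h}$ of the base word $h$, equals $1$ as a group element, and $h\overline{h}$ is a palindrome \emph{equal to} $h$. Note the consequences: there is no central block $p$; the palindrome has trivial top part, so your extra flexibility of an uncontrolled $\varepsilon(P)$ is unnecessary; and your worry that the junk positions must avoid the $a_i$ is a red herring, since each reversed block is identically trivial regardless of where it sits. Without this letter-level insertion of $r$ inside every commutator, your single-palindrome construction does not close up, so as it stands the proposal has a genuine gap at exactly the step you flagged.
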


\begin{figure}[!h]
  \labellist
  \pinlabel \textcolor{magenta}{$\overline{f_{1,1}^{-1}}$} at 20 30
  \pinlabel \textcolor{blueish}{$\overline{g_{1,1}^{-1}}$} at 170 50
  \pinlabel \textcolor{magenta}{$\overline{f_{1,1}}$} at 20 70
  \pinlabel \textcolor{blueish}{$\overline{g_{1,1}}$} at 170 90
  
  \pinlabel \textcolor{magenta}{$f_{1,1}^{-1}$} at 390 30
  \pinlabel \textcolor{blueish}{$g_{1,1}^{-1}$} at 390 50
  \pinlabel \textcolor{magenta}{$f_{1,1}$} at 390 70
  \pinlabel \textcolor{blueish}{$g_{1,1}$} at 390 90
  
  \pinlabel{\large{$a_i$}} at 450 0
  \pinlabel{\large{$\overline{a_i^{-1}}$}} at 100 0
  \pinlabel{\large{$\overline{r^{-1}} \cdot \overline{a_i^{-1}}$}} at 230 0
  
  \endlabellist
  \includegraphics[scale=0.75]{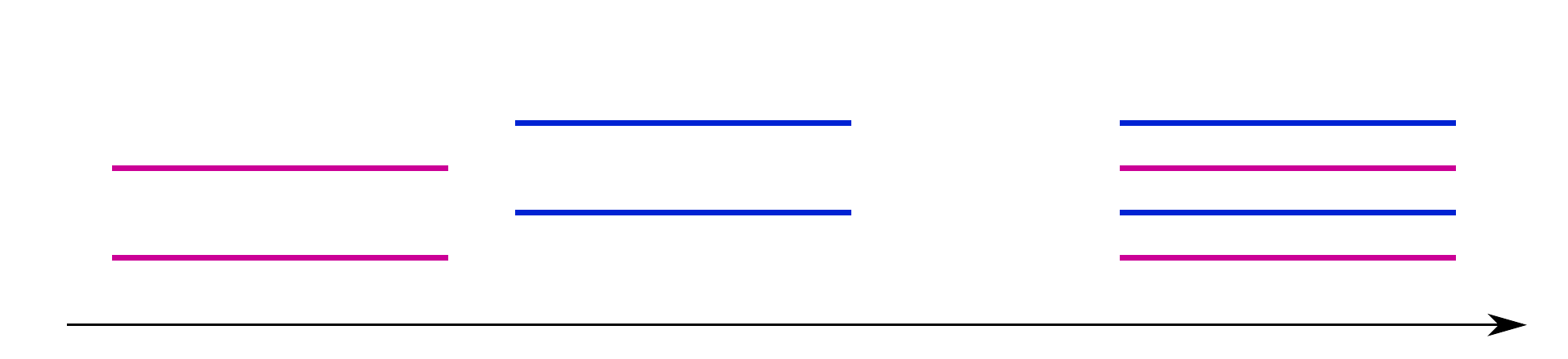}
  \caption{This is how we write one commutator on each position as a palindrome.}
  \label{fig_comm_1}
\end{figure}

The idea of the proof is that the commutator in Figure \ref{fig_comm_1} is a single palindrome of even length and in addition lies
in the base group of the wreath product. The elements on the right are the left hand side of the palindrome having positions $a_i$
and the elements on the left are the right hand side of the palindrome on positions $\overline{r^{-1}}\cdot \overline{a_i^{-1}}$
for a
relation $r$ that holds in $G$ such that $\overline{r}\neq 1$.

\begin{proof}[Proof of Theorem \ref{thm_der_group}]
Each element $b_i$ of $F_d' \times 1$ is a product of $n_i$ commutators. Assume that it is the product \[b_i = \prod_{j=1}^{n_i}
\left[f_{i,j}, g_{i,j}\right].\] Now an element $g$ of $F_d'\wr G$ has the form
\begin{equation}\label{eq_g} g=a \cdot \prod_{i=1}^k a_i^{-1} b_i a_i = a \cdot \prod_{i=1}^k a_i^{-1}
\left(\prod_{j=1}^{n_i}\left[f_{i,j},g_{i,j}\right]\right) a_i,\end{equation} with $f_i,g_i,b_i \in F_d \times 1$. We take into
account that we need at most $m$
palindromes for the element $a$ and concentrate on the product of commutators. Take $n$ to be the maximum over all $n_i$ and
write every $b_i$ as a product of $n$ (possibly trivial) commutators.

\medskip

We apply Lemma \ref{lemma_change_gen_set} to the generating set of $G$ and choose the relation $r$ in $G$ for which
$\overline{r}\neq 1$. We rewrite $g$ with 
\[\kappa_j = \prod_{j=1}^n a_{i}^{-1} f_{i,j}^{-1} r^{-1} g_{i,j}^{-1} r f_{i,j} r^{-1} g_{i,j} r a_{i},
\quad h = \prod_{i=1}^k \kappa_i,\] as $g= a \cdot h$. We see that now $\overline{h}$ is given by
\[\overline{h}= \prod_{i=k}^1 \overline{\kappa_i} = \prod_{i=k}^1 \overline{a_{i}} \cdot \overline{r} \cdot \overline{g_{i,j}}
\cdot \overline{r^{-1}} \cdot \overline{f_{i,j}} \cdot \overline{r} \cdot \overline{g_{i,j}^{-1}} \cdot \overline{r^{-1}} \cdot
\overline{f_{i,j}^{-1}} \cdot \overline{a_{i}^{-1}}.\]
We notice that for each $i$ the position of $\overline{f_{i,j}^{\pm 1}}$ is $\overline{a_{i}^{-1}}$ whereas all
$\overline{g_{i,j}^{\pm 1}}$ are at positions $\overline{r^{-1}} \cdot \overline{a_{i}^{-1}} $. This yields that
$\overline{\kappa_i}=1$ for each $i=1, \dots, n$ and hence $\overline{h}=1$. The word $h\overline{h}$ is of course a palindrome
and since $\overline{h}=1$ we have that $h\overline{h}=h$. We form the word
\[w_g = a h \overline{h},\] which is by construction representing $g$. We need at most $m$ palindromes to express $a$ and
$h\overline{h}$ is exactly one palindrome. Together we can write every element of $F_d' \wr G$ as a product of at most $m+1$
palindromes with respect to the generating set $X \cup \{c\} \cup Y$, with $c \in G$ being possibly trivial.
\end{proof}

We note that if $F_d^{ab} \wr G$ has finite palindromic width for a group $G$ with finite palindromic width, then $F_d \wr G$ has
finite palindromic width as well. However, it seems rather difficult to prove that $F_d^{ab} \wr G$ has finite palindromic width
in general for any non-abelian group $G$.

\medskip

We can again apply Lemma \ref{lemma_pw_in_quotients} and deduce that $H \wr G$ has finite palindromic width for any finitely
generated group $H$ and $G$ as in the Theorem above.

\begin{cor}
Let $H$ be a finitely generated generated group and $G$ a non-abelian group with $pw(G,X)=k$. Then every element of the
subgroup $H' \wr G$ is a product of at most $k+1$ palindromes with respect to $X \cup \left\{c\right\} \cup Y$, where $c \in G$,
possibly trivial.
\end{cor}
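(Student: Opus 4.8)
The plan is to derive this statement from Theorem~\ref{thm_der_group} by a quotient argument, in exactly the spirit of Lemma~\ref{lemma_pw_in_quotients}. Since $H$ is finitely generated, say by a set $Y$ with $|Y|=d$, there is a free group $F_d$ with free basis $\tilde{Y}$ and a surjection $\pi\colon F_d \twoheadrightarrow H$ carrying $\tilde{Y}$ onto $Y$. The free group $F_d$ is precisely the kind of base group to which Theorem~\ref{thm_der_group} applies, so I would produce palindromic factorisations there and transport them along $\pi$, relying on the fact that the bound furnished by Theorem~\ref{thm_der_group} is $m+1 = pw(G,X)+1$ and is independent of the rank $d$.

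First I would build the induced surjection of wreath products. Applying $\pi$ coordinate-wise on the base group $\prod_G F_d$ and the identity on the acting copy of $G$ yields a homomorphism $\Pi\colon F_d \wr G \twoheadrightarrow H \wr G$ whose kernel is $\prod_G N$, with $N=\ker\pi$; in particular $H \wr G$ is a genuine quotient of $F_d \wr G$. The key structural point to check is that $\Pi$ restricts to a \emph{surjection} $F_d'\wr G \twoheadrightarrow H'\wr G$. Since $\pi$ is onto, we have $\pi(F_d')=H'$, so the coordinate-wise map sends the base group $\prod_G F_d'$ onto $\prod_G H'$, while the acting factor $G$ is fixed; hence every element of $H'\wr G$ is the image under $\Pi$ of some element of $F_d'\wr G$.

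Next I would push palindromes through $\Pi$. By construction $\Pi$ maps the generating set $X\cup\{c\}\cup\tilde{Y}$ of $F_d\wr G$ onto $X\cup\{c\}\cup Y$, fixing $X$ and $c$ and sending $\tilde{Y}$ to $Y$, and a homomorphism carrying generators to generators sends any palindromic word to a palindromic word in the image generators. Thus, if $w\in F_d'\wr G$ is written as a product of at most $k+1$ palindromes in $X\cup\{c\}\cup\tilde{Y}$ as in Theorem~\ref{thm_der_group} (taking $m=pw(G,X)=k$), then $\Pi(w)$ is a product of at most $k+1$ palindromes in $X\cup\{c\}\cup Y$. Combined with the surjectivity of $\Pi|_{F_d'\wr G}$ from the previous step, this shows every element of $H'\wr G$ is such a product.

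The only genuinely substantive point, as opposed to routine bookkeeping, is the verification that $\Pi$ restricts \emph{onto} $H'\wr G$: here I must use the equality $\pi(F_d')=H'$ rather than the mere inclusion $\pi(F_d')\subseteq H'$, so that the image of $\prod_G F_d'$ is all of $\prod_G H'$. Once that is in place the corollary follows at once, generalising Theorem~\ref{thm_der_group} from free base groups to arbitrary finitely generated ones.
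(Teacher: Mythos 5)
Your proposal is correct and is essentially the paper's own argument: the paper derives this corollary in one line by applying Lemma~\ref{lemma_pw_in_quotients} to the quotient map $F_d \wr G \twoheadrightarrow H \wr G$ induced by a surjection $F_d \twoheadrightarrow H$, pushing the palindromic factorisations of Theorem~\ref{thm_der_group} forward through it. Your explicit check that the induced map carries $F_d' \wr G$ \emph{onto} $H' \wr G$ (via $\pi(F_d')=H'$) and sends palindromes in $X \cup \{c\} \cup \tilde{Y}$ to palindromes in $X \cup \{c\} \cup Y$ simply spells out the details the paper leaves implicit.
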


A group $P$ is called \emph{perfect} if $P=[P,P]$, in other words, if every element of $P$ is a product of commutators. This can
be applied to Theorem \ref{thm_der_group}.

\begin{cor}
Let $P$ be a finitely generated perfect group and $G$ a non-abelian finitely generated group with finite palindromic width
$pw(G,X)=k$ with respect to a generating set $X$. Then $P \wr G$ has finite palindromic width at most $k+1$.
\end{cor}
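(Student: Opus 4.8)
The plan is to deduce this immediately from Theorem \ref{thm_der_group} and its preceding Corollary, using the single fact that a perfect group satisfies $P = [P,P] = P'$. The whole point is that perfectness collapses the subgroup to which our earlier results apply onto the entire wreath product.

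First I would spell out the identification of subgroups. The base group of $P \wr G$ is $\prod_G P$, whereas the subgroup $P' \wr G$ has base group $\prod_G P'$. Since $P$ is perfect we have $P' = P$, so $\prod_G P' = \prod_G P$ and therefore $P' \wr G = P \wr G$; that is, the subgroup appearing in the Corollary to Theorem \ref{thm_der_group} is all of $P \wr G$. I would then apply that Corollary with $H = P$: for a finitely generated group $H$ and a non-abelian group $G$ with $pw(G,X)=k$, every element of $H' \wr G$ is a product of at most $k+1$ palindromes with respect to $X \cup \{c\} \cup Y$, where $Y$ generates $H$ and $c \in G$ is possibly trivial. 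Writing $S = Y$ for a chosen generating set of $P$ and combining with the previous step, every element of $P \wr G = P' \wr G$ is a product of at most $k+1$ palindromes with respect to $X \cup \{c\} \cup S$. Hence $pw(P \wr G, X \cup \{c\} \cup S) \leq k+1$, and in particular $P \wr G$ has finite palindromic width at most $k+1$.

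I do not expect any genuine obstacle, since all the analytic content---the explicit rewriting of a product of conjugated commutators as a single even-length palindrome lying in the base group---already resides in Theorem \ref{thm_der_group}. The only thing to verify carefully is that perfectness is used in exactly the right place: it is precisely the equality $P = P'$ that upgrades the Corollary's conclusion about $P' \wr G$ to the desired statement about all of $P \wr G$. If one preferred not to cite the general Corollary, the same conclusion follows by writing $P$ as a quotient of a free group $F_d$, applying Theorem \ref{thm_der_group} to $F_d' \wr G$, and pushing the bound through the quotient via Lemma \ref{lemma_pw_in_quotients}.
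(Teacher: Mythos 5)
Your proposal is correct and matches the paper's intended argument exactly: the paper states this corollary without proof, immediately after remarking that perfectness ($P=[P,P]$) ``can be applied to Theorem \ref{thm_der_group}'', which is precisely your observation that $P'=P$ makes $P'\wr G = P\wr G$, so the bound $k+1$ from the preceding corollary (or, as in your alternative, from Theorem \ref{thm_der_group} together with Lemma \ref{lemma_pw_in_quotients}) applies to the whole wreath product. Nothing further is needed.
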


Examples of perfect groups are alternating groups, all simple groups and certain classes of branch groups as described in
\cite{Segal_subgroupGrowth} and in a forthcoming paper of this author. It has further been pointed out by D. Gruber that it is
possible to construct hyperbolic perfect groups using small cancellation. 

\medskip

If we assume $H$ to be a non-abelian finite group with generating set $X$, then it follows immediately that every element $h \in
H$ is a product of at most $\max\left\{l_X(h) \h | \h h \in H\right\}$ palindromes, where $l_X(h)$ denotes the word length of $h$
with respect to a finite generating set $X$. With this we can deduce that also the wreath product $F_d \wr H$ has finite
palindromic width.

\begin{theorem}
Assume $H$ is a finite non-abelian group with generating set $X$ and $F_d$ a free $d$-generated group.
Then \[pw\left(F_d \wr H, X \cup \{c\} \cup Y\right) \leq \max\left\{l_X(h) \h | \h h \in H\right\} \cdot (d \cdot |H| + 1) +
1,\] with $c
\in H$, possibly $c=1$.
\end{theorem}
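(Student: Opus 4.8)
The plan is to follow the strategy indicated in the remark after Theorem~\ref{thm_der_group}: split $F_d\wr H$ into the abelian-by-finite quotient $F_d^{ab}\wr H=\mathbb{Z}^d\wr H$ and a correction living in $F_d'\wr H$. Write $L=\max\{l_X(h)\mid h\in H\}$; since every generator is already a palindrome we have $pw(H,X)\le L$, and this does not increase when we adjoin $c$. Consider the surjection $q\colon F_d\wr H\to\mathbb{Z}^d\wr H$ induced by the abelianisation $F_d\to F_d^{ab}$ on each coordinate, whose kernel is the base part $\prod_H F_d'\subseteq F_d'\wr H$. For $g\in F_d\wr H$ I would first express the image $q(g)$ as a product of palindromes in $\mathbb{Z}^d\wr H$, lift each palindromic word letter-for-letter to a palindrome in $F_d\wr H$ to obtain $\tilde g$ with $q(\tilde g)=q(g)$, and then write $g=\tilde g\cdot n$ with $n\in\ker q\subseteq F_d'\wr H$. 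By Theorem~\ref{thm_der_group} the factor $n$ costs at most $pw(H,X)+1\le L+1$ palindromes, so the whole problem reduces to bounding $pw(\mathbb{Z}^d\wr H)$.

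For the latter, write an element of $\mathbb{Z}^d\wr H$ as a pair $(\bar v,a)$ with $\bar v$ in the base $\bigoplus_H\mathbb{Z}^d$ and $a\in H$. The top element $a$ costs at most $L$ palindromes. The base is free abelian on the $d|H|$ elements $t_{h,i}$ represented by $\hat h^{-1}y_i\hat h$, where $\hat h$ is a geodesic word for $h$ and $y_1,\dots,y_d$ are the free generators, so $(\bar v,1)=\prod_{h,i}t_{h,i}^{\,v_{h,i}}$ with the factors commuting. Each factor $\hat h^{-1}y_i^{k}\hat h$ I would convert into palindromes by the reversal identity
\[
\hat h^{-1}y_i^{k}\hat h=\bigl(\hat h^{-1}\,y_i^{k}\,\overline{\hat h^{-1}}\bigr)\cdot\bigl(\overline{\hat h}\,\hat h\bigr),
\]
which holds because $\overline{\hat h^{-1}}\,\overline{\hat h}=(\overline{\hat h})^{-1}\overline{\hat h}=1$ by Lemma~\ref{lemma_overline}. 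The first factor is a palindrome, its middle syllable being the power word $y_i^{k}$ and its outer blocks being mirror images; the second factor $\overline{\hat h}\,\hat h$ is a palindromic word in $X$. Reading off $\prod_{h,i}t_{h,i}^{\,v_{h,i}}$ in this form exhibits $(\bar v,1)$ as a product of palindromes whose number is controlled by $d|H|$ and $L$, giving a bound on $pw(\mathbb{Z}^d\wr H)$ of the shape $L\cdot d|H|+L$.

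Putting the two pieces together yields a bound for $pw(F_d\wr H,\,X\cup\{c\}\cup Y)$ of the claimed form $L(d|H|+1)+1$. The main obstacle is the middle step: representing a \emph{base} element of the wreath product by palindromes. The difficulty is that a conjugate $\hat h^{-1}y_i^{k}\hat h$ is never itself a palindrome, so one must split off the spurious top contribution into the separate palindrome $\overline{\hat h}\,\hat h\in H$; this is exactly where finiteness of $H$ enters, as it bounds both the number of positions $|H|$ and the width $pw(H)\le L$ needed to clean up the top parts. A secondary point to check is that the non-commuting factors $t_{h,i}^{\,v_{h,i}}$ may be left as an uncollected product without disturbing the value, which is legitimate precisely because each displayed factor is individually palindromic.
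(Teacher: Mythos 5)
Your proposal is correct in substance, and its skeleton is the paper's own: split off the abelianisation, i.e.\ pass to $F_d^{ab}\wr H=\mathbb{Z}^d\wr H$, lift a palindromic expression letter-for-letter (which you make more explicit than the paper, whose splitting $\left(F_d^{ab}\wr H\right)\cdot\left(F_d'\wr H\right)$ leaves the lifting implicit), absorb the correction in $\ker q\subseteq F_d'\wr H$ via Theorem \ref{thm_der_group}, and use $pw(H,X)\le L$ because each generator is a palindrome. The genuine difference is in the treatment of $\mathbb{Z}^d\wr H$. The paper avoids conjugates entirely: it writes such an element in the interleaved form $g=\prod_{i=1}^{|H|}a_ib_i$ with $a_i\in H$ and $b_i\in F_d^{ab}\times 1$, paying at most $L$ single-letter palindromes per $a_i$ and at most $d$ power-word palindromes $y_j^{k_j}$ per $b_i$. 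You instead use the conjugate generators $t_{h,i}$ and the factorisation $\hat h^{-1}y_i^k\hat h=\bigl(\hat h^{-1}y_i^k\overline{\hat h^{-1}}\bigr)\bigl(\overline{\hat h}\,\hat h\bigr)$, which is valid by Lemma \ref{lemma_overline}. Two remarks on your version. First, this identity is \emph{exact}, so your framing of $\overline{\hat h}\,\hat h$ as a ``spurious top contribution'' that must be ``cleaned up'' is misleading: there is no error term to propagate, each conjugate is replaced by an equal product of two palindromes, and that is precisely why the uncollected product over $(h,i)$ needs no further justification. Second, your count is misstated: your cost per factor $t_{h,i}^{v_{h,i}}$ is $2$, independent of $L$, so your construction yields $2d|H|+L$ palindromes for $\mathbb{Z}^d\wr H$ and $2d|H|+2L+1$ in total, not the shape $L\cdot d|H|+L$ you announce. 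Since $H$ non-abelian forces $|H|\ge 6$, this is \emph{sharper} than the stated bound $L(d|H|+1)+1$ whenever $L\ge 3$, but it overshoots it for $L\in\{1,2\}$; in those edge cases one should fall back on the paper's interleaved form. In fairness, the paper's own bookkeeping has comparable slack: its construction costs $|H|(d+L)$ for the abelian part, which is below the claimed $L\cdot|H|\cdot d$ only when $(L-1)(d-1)\ge 1$. In sum: same decomposition and same use of Theorem \ref{thm_der_group}, a genuinely different and equally workable mechanism for the base part (conjugate-palindrome pairs versus interleaved normal form), with the paper's route giving the cleaner route to the stated constant.
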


\begin{proof}
We split the group $F_d \wr H$ into $\left(F_d^{ab} \wr H\right) \cdot \left(F_d' \wr H\right)$. The statement for $t \in F_d' \wr
H$ follows from Theorem \ref{thm_der_group}, using at most $\max\left\{l_X(h) \h | \h h \in H\right\} + 1$ palindromes in the
generating set $X \cup \left\{c\right\} \cup Y, c \in H$, to express $t$. An element $g$ of $F_d^{ab} \wr H$ has the form
\[g = a \cdot \prod_{i=1}^{|H|} c_i^{-1} b_i c_i, \quad c_i \in H, b_i \in F_d^{ab} \times 1\] or more convenient in this case
\[g = \prod_{i=1}^{|H|} a_i b_i, \quad a_i \in H, b_i \in F_d^{ab} \times 1.\]

Every element $b_i \in F_d^{ab} \times 1$ is a product of at most $d$ palindromes and every $a_i$ can be written as a product of
at most
$\max\left\{l_X(h) \h | \h h \in H\right\}$ palindromes. We do this for every element of $H$, hence we need at most
$\max\left\{l_X(h) \h | \h h \in H\right\} \cdot |H| \cdot d$ palindromes for $g \in F_d' \wr H$ and at most $\max\left\{l_X(h)
\h | \h h \in H\right\} + 1$ palindromes
for $t \in F_d^{ab} \wr H$, which gives at most \[\max\left\{l_X(h) \h | \h
h \in H\right\} \cdot (d \cdot |H| + 1)+1\] palindromes for $g \cdot t$. The statement follows since every element of $F_d \wr H$
can be expressed in that way.
\end{proof}

By Lemma \ref{lemma_pw_in_quotients}, this also carries over to the wreath product of any finitely generated group $G$ with $H$ as
above.

\begin{cor}
Assume that $H$ is a non-abelian finite group with generating set $X$ and $G$ a group with $d$ generators. Then
\[pw\left(G \wr H, X \cup \{c\} \cup Y\right) \leq \max\left\{l_X(h) \h | \h h \in H\right\} \cdot (d \cdot |H| + 1) +
1,\] with $c \in H$, possibly $c=1$, for any generating set $Y$ of $G$.
\end{cor}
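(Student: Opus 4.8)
The plan is to realise $G \wr H$ as a quotient of $F_d \wr H$ and then invoke Lemma \ref{lemma_pw_in_quotients} together with the bound for the free case established in the preceding Theorem. First I would fix a surjection $\phi \colon F_d \twoheadrightarrow G$, which exists because $G$ is $d$-generated, and set $N = \ker \phi$. Applying $\phi$ coordinatewise gives a map $\prod_H F_d \to \prod_H G$, and since $H$ acts on each base group merely by permuting coordinates this coordinatewise map is $H$-equivariant; hence it extends to a surjective homomorphism $\Phi \colon F_d \wr H \to G \wr H$ that is the identity on the $H$-factor. Its kernel is $\prod_H N$, which is normal in $F_d \wr H$ because $N \lhd F_d$ and $H$ preserves the product over coordinates, so $G \wr H \cong (F_d \wr H)/(\prod_H N)$.

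Next I would check that the generating sets match up. The set $X \cup \{c\} \cup Y$ generating $F_d \wr H$ --- where $X$ generates $H$, $c \in H$ is the element supplied by Lemma \ref{lemma_change_gen_set} applied to the non-abelian group $H$, and $Y$ is the set of free generators of $F_d$ --- is carried by $\Phi$ onto a generating set of the same shape for $G \wr H$: the elements of $X$ and $c$ lie in the unchanged factor $H$, while $Y$ maps onto a $d$-element generating set of $G$. In particular $H$ itself, the relation $r$ with $\overline{r}\neq 1$, and the quantity $\max\{l_X(h) \h | \h h \in H\}$ occurring in the bound are all untouched by the quotient.

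Finally, Lemma \ref{lemma_pw_in_quotients} gives $pw(G \wr H, \Phi(X \cup \{c\} \cup Y)) \leq pw(F_d \wr H, X \cup \{c\} \cup Y)$, and the preceding Theorem bounds the right-hand side by $\max\{l_X(h) \h | \h h \in H\} \cdot (d \cdot |H| + 1) + 1$, which is exactly the claimed inequality. I do not expect a serious obstacle here: the only points needing care are the verification that $\Phi$ is well defined and $H$-equivariant, which is routine, and the bookkeeping ensuring that $\Phi(X \cup \{c\} \cup Y)$ is literally the generating set $X \cup \{c\} \cup Y$ named in the statement of the Corollary.
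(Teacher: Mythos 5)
Your proposal is correct and follows exactly the paper's route: the paper derives this Corollary from the preceding Theorem on $F_d \wr H$ via Lemma \ref{lemma_pw_in_quotients}, using implicitly the surjection $F_d \wr H \twoheadrightarrow G \wr H$ induced coordinatewise by $F_d \twoheadrightarrow G$. The details you spell out --- the $H$-equivariance of the coordinatewise map, the kernel $\prod_H N$, and the matching of the generating sets under the quotient --- are precisely what the paper leaves implicit in its one-line remark before the Corollary.
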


\renewcommand{\bibname}{References}
\bibliography{bibliography}        

\begin{thebibliography}{LOST11}

\bibitem[AM97]{akhavan_malayeri_thesis}
M.~Akhavan~Malayeri.
\newblock {\em On the commutator lengths in some finitely generated groups.}
\newblock Phd-thesis, University of Alberta, 1997.

\bibitem[AM06]{akhavan_2}
M.~Akhavan-Malayeri.
\newblock Commutator length of solvable groups satisfying max-$n$.
\newblock {\em Bull. Korean Math. Soc.}, 43(4):805--812, 2006.

\bibitem[AM10]{akhavan_comm_wreath}
M.~Akhavan-Malayeri.
\newblock On commutator length and square length of the wreath product of a
  group by a finitely generated abelian group.
\newblock {\em Algebra Colloq.}, 17(1):799--802, 2010.

\bibitem[BG14a]{bardakov_nilpotent}
V.~Bardakov and K.~Gongopadhyay.
\newblock On palindromic width of certain extensions and quotients of free
  nilpotent groups.
\newblock {\em In Preparation}, 2014.

\bibitem[BG14b]{bardakov_soluble}
V.~Bardakov and K.~Gongopadhyay.
\newblock Palindromic width of finitely generated soluble groups.
\newblock {\em In Preparation}, 2014.

\bibitem[BG14c]{bardakov}
V.~Bardakov and K.~Gongopadhyay.
\newblock Palindromic width of nilpotent groups.
\newblock {\em Journal of Algebra}, 402:379--391, 2014.

\bibitem[BST05]{bardakov_shpilrain_tolstykh}
V.~Bardakov, V.~Shpilrain, and V.~Tolstykhc.
\newblock On the palindromic and primitive widths of a free group.
\newblock {\em Journal of Algebra}, 285(2):574--585, 2005.

\bibitem[Col95]{collins_palindromes}
D.J. Collins.
\newblock Palindromic automorphisms of free groups.
\newblock {\em London Math. Soc. Lecture Note Ser.}, (204):63--72, 1995.

\bibitem[Del10]{deloup}
F.~Deloup.
\newblock Palindromes and orderings in artin groups.
\newblock {\em J. Knot Theory Ramifications}, 19(2):145--162, 2010.

\bibitem[GC00]{glover_jensen}
H.H. Glover and Jensen C.A.
\newblock Geometry for palindromic automorphism groups of free groups.
\newblock {\em Comment. Math. Helv.}, 75(4):644--667, 2000.

\bibitem[LOST10]{ore_conj}
M.W. Liebeck, E.~A. O'Brien, A.~Shalev, and P.H. Tiep.
\newblock The ore conjecture.
\newblock {\em J. Eur. Math. Soc. (JEMS)}, 12(4):939--1008, 2010.

\bibitem[LOST11]{quasi_simple_groups}
M.W. Liebeck, E.A. O'Brian, A.~Shalev, and P.H. Tiep.
\newblock Commutators in finite quasisimple groups.
\newblock {\em Bull. Lond. Math. Soc.}, (6):1079--1092, 2011.

\bibitem[LS03]{Segal_subgroupGrowth}
A.~Lubotzky and D.~Segal.
\newblock {\em Subgroup growth.}
\newblock Number 212 in Progress in Mathematics. Birkhauser Verlag, Basel,
  2003.

\bibitem[RS14]{rileySale}
T.~Riley and A.~Sale.
\newblock Palindromic width of wreath products, metabelian groups and solvable
  max-n groups.
\newblock {\em http://arxiv.org/abs/1307.4861}, 2014.

\bibitem[Seg09]{dan_words}
D.~Segal.
\newblock {\em Words: notes on verbal width in groups.}
\newblock Number 361 in London Mathematical Society Lecture Note Series.
  Cambridge University Press, 2009.

\end{thebibliography}
\bibliographystyle{plain}  

\end{document}